\documentclass[11pt]{amsart}
\usepackage{amsfonts,amsmath,amssymb,amsthm}% Required for inserting images
\theoremstyle{plain}
\usepackage[nomarginpar, margin=2cm, foot=.25in]{geometry}
\usepackage{mathrsfs}
\usepackage{graphicx} 
\usepackage{mathrsfs}
\usepackage{fdsymbol}
\usepackage{hyperref}
\usepackage{enumitem}
\usepackage{longtable}
\usepackage{tikz-cd}
\usepackage{tabularx}
\newtheorem{ex}{Example}[section]

\newtheorem{thm}{Theorem}[section]
\newtheorem{dfn}{Definition}[section]
\newtheorem{cor}{Corollary}[section]
\newtheorem{lem}{Lemma}[section]
\newtheorem{prop}{Proposition}[section]
\newtheorem{rem}{Remark}[section]
\newtheorem*{thm-non}{Theorem}
\newtheorem*{cor-non}{Corollary}

\newcommand{\Spec}{\rm Spec}
\newcommand{\Out}{\rm Out}
\newcommand{\F}{\widehat{F}}
\newcommand{\bN}{\mathbb{N}}
\newcommand{\bZ}{\mathbb{Z}}

\newcommand{\GT}{\widehat{GT}}
\newcommand{\Gal}{\operatorname{Gal}_{\Q}}
\newcommand{\Q}{\mathbb{Q}}

\newcommand{\cC}{\mathcal {C}}

\newcommand\myeq{\mathrel{\overset{\makebox[0pt]{\mbox{\normalfont\tiny\sffamily homeo}}}{\simeq}}}

\title[Grothendieck's conjecture $\&$ Galois's Path Integral]{Proving the Grothendieck--Teichmüller Conjecture for Profinite Spaces \\ $\&$ \\ The Galois Grothendieck Path Integral}
\author{Noémie C. Combe}
\address{University of Warsaw, Ulica Banacha 2, 02-097 Warsaw}\email{ n.combe@uw.edu.pl}
\date{}

\begin{document}

\begin{abstract}
We establish that the Grothendieck–Teichmüller conjecture, which predicts an isomorphism between the Grothendieck–Teichmüller group $\GT$ and the absolute Galois group $\Gal$ of $\mathbb{Q}$, holds in the setting of profinite spaces. To access arithmetic information within this framework, we introduce a generalization of the notion of path integrals, defined simultaneously for $\GT$ and $\Gal$. This construction reveals new arithmetic invariants for both groups, capturing, in particular, the rationality of periods associated with $\GT$ and the rational structure of the coefficients in the Drinfeld associator. Moreover, this perspective provides a mechanism to detect the descent of cohomology classes to $\mathbb{Q}$ under the action of $\Gal$, and more broadly, a method to compute ranks of arithmetic objects. Furthermore, we introduce an algorithm, which we call the Cubic Matrioshka, encoding paths in both $\GT$ and $\Gal$ as unique infinite binary sequences. The compatibility of these binary encodings reflects the underlying homeomorphism and ensures that symmetries of $\GT$ are faithfully mirrored in the corresponding binary representation within $\Gal$.

\end{abstract}
\thanks{{\bf Acknowledgements.} I am deeply grateful, first and foremost, to the referee, whose insightful and generous suggestion regarding the compatibility of Cantorian paths in $\Gal$ and $\GT$ played a pivotal role in shaping theorem~\ref{T:Compatible}.  I also warmly thank Pierre Lochak for pointing out a small inaccuracy in the bibliography.
I sincerely thank Vasily Dolgushev for enlightening discussions on  $\GT$-shadows, which have deeply influenced my understanding of their structure and inspired new directions in this work. I would like to thank John S. Wilson, for having given a lecture on profinite groups, which I attended as an undergraduate student, and that has provided the necessary insights for the developments presented in this work. I would like to express my gratitude to Rostislav Grigorchuk for his inspiring lecture on finitely generated groups, which has inspired me the Cubic Matrioshka Algorithm construction. I am grateful for having been invited to Oberwolfach MFO, on {\it Arithmetic and Homotopic Galois Theory} Sept. 2023 (ID: 2339a)
as well as  {\it Combinatorial and Algebraic Structures in Rough Analysis and Related Fields} Nov. 2023 (ID: 2348a),
 where I had the opportunity to gain valuable insights into the state of the art in the fields and that inspired me this paper. This research is part of the project No. 2022/47/P/ST1/01177 cofunded by the National Science Centre  and the European Union's Horizon 2020 research and innovation program, under the Marie Sklodowska Curie grant agreement No. 945339 \includegraphics[width=1cm, height=0.5cm]{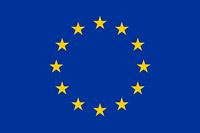}. For the
purpose of Open Access, the author has applied a CC-BY public copyright license
to any Author Accepted Manuscript (AAM) version arising from this submission.}

\maketitle

{\bf MSC:} Primary:  20E18; 11G32; 11R32; 54H05.

\, 

Secondary: 57M07; 54G12; 81S40

\smallskip 

{\bf Keywords} Profinite groups and their properties; Grothendieck--Teichmüller groups, fundamental groups, and related topics; Descriptive set theory in topology; Topological methods in group theory; Cantor space. %Path integrals in quantum mechanics.

\section{Introduction}
%SA
The absolute Galois group $\Gal=\operatorname{Gal}(\overline{\Q}/\Q)$ of rational numbers stands as one of the most fundamental and enigmatic objects in modern algebraic geometry and number theory. Explicitly describing the elements of $\Gal$ is a profound open problem. Numerous conjectures aim at characterising $\Gal$ (or its representations), such as the Inverse Galois Problem, Grothendieck’s Section Conjecture, Shafarevich’s Dream, and the Grothendieck--Teichmüller Conjecture. A deep understanding of its topological, combinatorial, and algebraic structure holds far-reaching implications for anabelian geometry, the study of Diophantine equations, and the Langlands program \cite{Neu,NSW,Len,W,Ser,Ser2,Ser3}.

%Challenge

\subsection{The Grothendieck--Teichmüller Conjecture}

\subsubsection{Introducing $\GT$}
Parallel to this, the Grothendieck–Teichmüller group $\GT$ plays a fundamental role in arithmetic geometry, anabelian geometry, and the study of profinite fundamental groups of algebraic curves \cite{CM1,CM2,CMM1,CMM2,M,HS,PS,LS,Sch}. It serves as a universal symmetry group encoding the hidden Galois-theoretic structures underlying moduli spaces of algebraic curves and their fundamental groups as well as in deformation theory. 

The group $\GT$ consists of pairs $(\lambda,f)$, where $\lambda \in \widehat{\mathbb{Z}}^{\times}$ (the profinite completion of $\mathbb{Z}^{\times}$) and $f\in \widehat{F}_2$  the profinite completion of the free group on two generators, say $x,y$ satisfying relations:
\begin{itemize} 
\item $f(x,y)f(y,x)=1$,
\item $f(z,x)z^mf(y,z)y^mf(x,y)x^m=1$ for $m=(\lambda-1)/2$ and  $z=(xy)^{-1}$,
\item $f(x_{12},x_{23}x_{24})f(x_{13}x_{23},x_{34})=f(x_{23},x_{34})f(x_{12}x_{13},x_{24}x_{34})f(x_{12},x_{23})$.
\end{itemize}
The last equation takes place in the profinite completion of the pure braid group with four strands $\widehat{PB}_4$, with generators $x_{ij}$. 
\smallskip 

Despite their distinct origins, the structural properties of
$\Gal$ and $\GT$ exhibit profound similarities, a phenomenon at the heart of Grothendieck’s vision of Galois--Teichmüller theory.

\subsubsection{Grothendieck's Enigma}

The longstanding Grothendieck--Teichmüller Conjecture asserts the existence of an isomorphism of profinite groups between 
$\GT$ and $\Gal$ i.e. 
\[\Gal\cong \GT.\] Drinfel'd \cite{Dr} established that there exists a natural embedding 
$\Gal\hookrightarrow \GT$, yet the question of whether this morphism is surjective, and hence an isomorphism, remains open.

\medskip 

 \subsection{Result Description $\&$ Modus Operandi}
\begin {itemize}
\item In this work, we demonstrate that the Grothendieck--Teichmüller Conjecture holds in the category of profinite spaces {\bf ProFin} (Thm. \ref{T:Cantor}): 
\[\Gal\quad \myeq \quad \widehat{GT}\quad  \myeq\quad\text{Cantor set}.\]
 Our {\it modus operandi} is framed within the context of Polish group theory, wherein we establish that both $\Gal$ and $\GT$ are homeomorphic to the Cantor set (see Appendix Sec.~\ref{S:2}-\ref{S:3} and Sec.\ref{S:5}). \item  In Theorem \ref{T:GPI}, we generalise the notion of Feynman Path Integral to the arithmetic setting by introducing the notion of a Galois Grothendieck Path Integral, on $\Gal$ and respectively on $\GT$. The motivation is to recover the initial arithmetic data which got lost during the topological construction in the category {\bf ProFin}. This leads to the elaboration of arithmetic {\bf invariants} of the absolute Galois and the Grothendieck--Teichmüller groups (Theorem~\ref{T:9.2} and Theorem~\ref{T:9.3}). Both invariants are path integrals, where trajectories are automorphisms in $\Gal$ (resp. $\GT$). 
These integrals are defined by:
 \[
I([c]) = \int_{\Gal} \exp(2\pi i \langle \sigma, [c] \rangle) \, d\mu(\sigma), \quad
I([\Psi]) = \int_{\widehat{GT}} \exp(2\pi i \langle \theta, [\Psi] \rangle) \, d\mu(\theta),
\] 

and such that 
\begin{enumerate}
\item  the {\bf measure} $\mu$ is the normalized Haar measure, 
\item  {\bf Paths} are elements $\sigma$ (res. $\theta$) forming {\bf arithmetic paths.} Each automorphism $\sigma\in \Gal$ (resp. $\theta\in \GT$) is interpreted as an arithmetic path, because this encodes sequences of field extensions, with automorphisms  transitioning between étale covers. %A similar idea holds for $\theta \in \GT$. 
\item {\bf Arithmetic Functional:}  $\langle \cdot, \cdot \rangle$ be a natural Galois  pairing for $\Gal$ (resp. motivic for $\GT$), explained below:
\begin{itemize}
\item Let $[c]$ be a {\bf cohomology class} i.e. an element in $H^1_{\mathrm{\acute{e}t}}(M,\mathbb{Q}/\mathbb{Z})$, where $M/\Q$ is a motive.
\item The {\bf pairing} $\langle \sigma, [c]\rangle$ is defined via Kummer duality or evaluation of cocycles (in more general settings):

\[\langle \sigma, [c] \rangle:\Gal\times H^1_{\mathrm{\acute{e}t}}(M)\longrightarrow \,  \mathbb{Q}/\mathbb{Z}.\] 
This is the evaluation of the Galois action on the class $[c]$, arising from the exact sequence:
\[0\to  \mathbb{Q}/\mathbb{Z} \to H^1_{\mathrm{\acute{e}t}}(M_{\overline{\Q}}, \Q/\bZ) \to H^1_{\mathrm{\acute{e}t}}(M_{\Q},\Q/\bZ)\to 0, \]
\,
which allows, via the exponential, to define a continuous finite-order character \[\xi_{[c]}:\Gal \to \mathbb{C}^{\times}, \]
given by \[\xi_{[c]}(\sigma):= \exp(2\pi \imath \langle \sigma,[c]\rangle),\] and where
$\xi_{[c]}$ takes values in roots of unity (since $\langle \sigma,[c]\rangle\in  \mathbb{Q}/\mathbb{Z}$). 
It is non-trivial unless $[c]$ is $\Gal$-invariant (that is $[c]$ descends to $\Q$).

\end{itemize}

\item For $\GT$ this invariant detects triviality in the category of mixed Tate motives. The pairing $\langle \theta, [\Psi] \rangle = \theta(\Psi_{\text{mot}}) \in \mathbb{Q}$ is defined via the coaction of the motivic Galois group.  Let $\mathcal{MT}(\mathbb{Z})$ be the category of mixed Tate motives over $\mathbb{Z}$, with motivic Galois group $G_{\mathcal{MT}(\mathbb{Z})}$. The coaction is a morphism:
\[
\Delta : \mathcal{H}_{\text{mot}} \to \mathcal{H}_{\text{mot}} \otimes \mathcal{H}_{\text{mot}},
\]
where $\mathcal{H}_{\text{mot}} = \mathcal{O}(G_{\mathcal{MT}(\mathbb{Z})})$ is the Hopf algebra of motivic periods. For a motivic period $[\Psi]$, represented by $\Psi_{\text{mot}} \in \mathcal{H}_{\text{mot}}$, the coaction decomposes it as:
\[
\Delta(\Psi_{\text{mot}}) = \sum_i \Psi_{\text{mot}}^{(i)} \otimes h_i, \quad h_i \in \mathcal{H}_{\text{mot}}.
\]

The Grothendieck-Teichmüller group $\widehat{GT}$ embeds into $G_{\mathcal{MT}(\mathbb{Z})}(\mathbb{Q})$ via 
\[
\iota: \widehat{GT} \hookrightarrow G_{\mathcal{MT}(\mathbb{Z})}(\mathbb{Q}).
\]
For $\theta \in \widehat{GT}$, the evaluation $\theta(\Psi_{\text{mot}})$ is defined by applying the coaction followed by evaluation at $\iota(\theta)$:
\[
\theta(\Psi_{\text{mot}}) = \sum_i \Psi_{\text{mot}}^{(i)} \cdot h_i(\iota(\theta)) \in \mathbb{Q}.
\]
This is equivalent to:
\[
\langle \theta, [\Psi] \rangle = (\operatorname{id} \otimes \operatorname{ev}_{\iota(\theta)}) \circ \Delta(\Psi_{\text{mot}}),
\]
where $\operatorname{ev}_{\iota(\theta)} : \mathcal{H}_{\text{mot}} \to \mathbb{Q}$ is the evaluation map at $\iota(\theta)$.
\end{enumerate}

 \item We close that section by  generalising this construction in order to compute ranks of arithmetic objects (e.g., Mordell-Weil rank). Precisely, if $\rho: \Gal \to \mathrm{GL}_n(\mathbb{Q}_\ell)$ is a continuous $\ell$-adic Galois representation, we define a generalization of the arithmetic path integral via:
\[
I(\rho) = \int_{\Gal} \mathrm{Tr}(\rho(\sigma)) \, d\mu(\sigma) = \dim_{\mathbb{Q}_\ell}(\rho^{\Gal}),
\]
where $\rho^{\Gal}$ is the subspace of $\Gal$-invariant vectors. We refer to Table~\ref{T:2} for a discussion concerning the invariants. 

\item[]
 \item In Sec.~\ref{S:6}--\ref{S:CubicMa} we discuss more deeply the construction of arithmetic paths. Every path on $\Gal$ (and of $\GT$) admits a unique representation as an infinite binary sequence (an infinite word of 0's and 1's). Section \ref{S:CubicMa} introduces the Cubic Matrioshka Algorithm (CMA): a concrete construction enabling an explicit characterisation of those paths  in terms of their profinite topology, applicable to both $\GT$ and $\Gal$. Our algorithm allows to keep track of paths of field extensions and corresponding automorphisms (see Proposition \ref{P:BG}).  Note however, that this homeomorphism is {\it not} canonical and therefore, requires to {\it fix} a convention of choices of clopens of $\Gal$ and $\GT$ in our construction. 

 \item These binary encodings in $\Gal$ and $\GT$ are {\it compatible} by Theorem \ref{T:Compatible} under the natural correspondence between $\Gal$ and $\GT$. In particular, the infinite binary sequence attached to any element of $\Gal$ uniquely determines the binary sequence of the corresponding element in $\GT$. Thus, this new method tracks algorithmically structural properties across both objects ($\Gal$ and of $\GT$) via their binary encodings.
\end{itemize}

Comparing the properties of the respective invariants defined for $\GT$ and for $\Gal$, we establish the following table.
\medskip 
\begin{center}
\begin{table}[h]
\renewcommand{\arraystretch}{1.02}
\small % You can change to \footnotesize if needed
\begin{tabularx}{\textwidth}{|c|X|X|}
\hline
\textbf{Aspect} &
\textbf{For $\widehat{GT}$ (Motivic Periods)} &
\textbf{For $\mathrm{Gal}(\overline{\mathbb{Q}}/\mathbb{Q})$ (\'Etale Classes)} \\
\hline
Invariant &
Rationality of periods &
Descent of cohomology classes to $\mathbb{Q}$ \\
\hline
Output $1$ means &
$[\Psi] \in \mathbb{Q}$ &
$[c]$ defined over $\mathbb{Q}$ \\
\hline
Output $0$ means &
Period irrational or transcendental &
Class obstructed over $\mathbb{Q}$ \\
\hline
Arithmetic significance &
Transcendence theory, period conjectures &
Tate--Shafarevich group \\
\hline
Possible Key conjecture links &
Grothendieck period conjecture &
Birch and Swinnerton--Dyer conjecture (BSD) \\
\hline
\end{tabularx}
\caption{Properties of the binary integral for $\GT$ and $\Gal$}\label{T:1}
\end{table}
\end{center}

\medskip 
%%%%%
\section{Profinite Groups, Profinite Spaces and Cantor Sets}\label{S:1}
\subsection{}
We recall some well known facts concerning profinite groups. A profinite group is a pro-object in the category of finite groups. It is a cofiltered limit (inverse limit) of finite groups, where transition maps are surjective homomorphisms. 
The category {\bf ProFinGrp} of profinite groups is equivalent to the pro-category of finite groups, {\bf Pro(FinGrp)}. This is another way of stating that profinite groups are ``formal'' cofiltered limits of finite groups. This category of profinite groups is complete and products, inverse limits are constructed as in the category of topological groups, with the product/limit topology.

\, 

The profinite completion functor is left adjoint to the forgetful functor: 
\[{\bf Grp} \to {\bf ProFinGrp}, \quad U:{\bf ProFinGrp}\to {\bf Grp}. \] 

\subsubsection{} Furthermore, since profinite groups are Stone spaces (compact, Hausdorff, totally disconnected), this property allows to connect them to Boolean algebras via Stone duality. The Stone duality establishes a contravariant equivalence of categories:  

\[{\bf StoneSpace} \simeq {\bf BoolAlg}^{op}\]
where: 
\begin{itemize}
\item {\bf StoneSpace} is the category of  Stone spaces with continuous maps 
\item  {\bf BoolAlg} is the category of Boolean algebras with homomorphisms. 
\end{itemize}
Under this equivalence, a Stone space $X$ corresponds to the Boolean algebra $Clopen(X)$ of its clopen subsets (subsets that are closed {\it and} open), and vice versa \cite{Gao,vM,J}.

\, 
\subsubsection{}
 A Galois category is a category equipped with a fiber functor to finite sets, whose automorphism group is profinite. For example, the category of finite étale covers of a scheme $X$ forms a Galois category, with $\pi_1^{et}(X)$, the étale fundamental group as its profinite automorphism group. An important statement shows that the fundamental group of a Galois category is a profinite group, and every profinite group arises this way.
In fact, by \cite[Thm. 3.3.2]{Wl}, every profinite group is isomorphic (as a topological group) to some Galois group. 
 
 \, 
 
 There are several properties that we recall: 
 \begin{itemize} 
 \item Closed subgroups of profinite groups are profinite.
 \item Continuous homomorphic images of profinite groups are profinite.
 \item A profinite group is topologically finitely generated if it has a dense finitely generated subgroup.
 \end{itemize}
 \subsection{}
 A profinite space is a cofiltered limit (inverse limit) of finite discrete spaces. Profinite spaces form a well-defined category with continuous maps as morphisms and is denoted {\bf ProFin}.
 
 In particular, the Cantor set is a profinite space, which explicitly can be expressed as: 
 
 \[\cC\cong \varprojlim_{n\in \mathbb{N}}\{0,1\}^n,\]
 where the bonding maps $\pi_n:\{0,1\}^{n+1}\to \{0,1\}^n$ are projections dropping the last coordinate. This identifies 
$\cC$ as an object in {\bf ProFin}, the category of profinite spaces.

\, 
\subsubsection{}

In the category {\bf CPTD} of compact, perfect, totally disconnected, metrizable spaces, the Cantor set $\cC$  is the unique object in  {\bf CPTD} up to homeomorphism. This property enjoys a universal property: for any object $X\in {\rm Ob}({\bf CPTD})$, there is a unique (up to isomorphism) continuous surjection $X\to \cC$.

\, 

Consider the following inverse system (diagram) in the category of topological spaces, where objects are finite discrete space $\{0,1\}^{n}$ for $n\in\bN$. 
Morphisms are projection maps $p_n:\{0,1\}^{n+1}\to \{0,1\}^{n}$ that drop the last coordinate:
\[p_n(x_1,x_2,x_3,\cdots , x_{n+1})\to (x_1,x_2,x_3,\cdots , x_{n})\] and this forms a tower of spaces and maps.
The  inverse limit (or projective limit) of this diagram is the space of infinite binary sequences:
\[\ \varprojlim_{n\in \mathbb{N}}\{0,1\}^n.\]

\subsubsection{}

Going back to the Stone duality, the Cantor set corresponds to the countable atomless\footnote{$\forall a \in \mathbb{B} \setminus \{0\},\ \exists b \in \mathbb{B} \setminus \{0\} \text{ such that } b \leq a \text{ and } b \neq a.$} Boolean algebra $\mathbb{B}$, which is unique up to isomorphism: 
\[\cC\cong {\rm Spec}(\mathbb{B})\]
where ${\rm Spec}$ is the Stone space functor. Here $\mathbb{B}$ has no minimal elements (atoms) and is uniquely determined up to isomorphism.
The space $\cC$ is perfect (no isolated points) and universal for compact metrizable Stone spaces.
\,

Therefore, the Cantor set is a profinite space---an inverse limit of finite discrete spaces it is 
 compact, Hausdorff, and totally disconnected. It is additionally a perfect (no isolated points) and metrizable., 

\subsubsection{}

A cylinder set $$Z_s=\{x\in \cC\, ;\,  x\quad \text{starts with}\, s\},$$ for a fixed  word  $s\in\{0,1\}^n$ of length $n$ which 
corresponds to the Boolean algebra element $a_s \in \mathbb{B}$. In particular, the union and intersection operations are given by
\[Z_s\cup Z_t\leftrightarrow a_s\vee a_t, \quad Z_s\cap Z_t\leftrightarrow a_s\wedge a_t\] 

The topology on $\cC$ is generated by cylinder sets $Z_s$ and the cylinder sets $Z_s$  are clopen. 

\,

\subsubsection{}
The Cantor space being metrizable it is equipped with a standard metric defined as follows:

\begin{equation}\label{E:metricC}
d(x,y)=\frac{1}{2^{\min\{k\, | x_k\neq y_k\}}}
\end{equation}
where $x=(x_1,x_2,\cdots)$ and $y=(y_1,y_2,\cdots)$ are sequences in $\cC$. This is an ultrametric, meaning that:

\[d(x,z)\leq \max\{d(x,y),d(y,z)\}.\]

\subsubsection{}

The Cantor set, (the set of infinite binary sequences with the product topology) plays a significant role in model theory, particularly in the study of types, theories, and topological dynamics.

More precisely, points in $\cC$ correspond to models of propositional theories, where each bit decides the truth value of a propositional variable. In propositional Logic, the Cantor space is homeomorphic to the space of truth assignments for a countable set of propositional variables. Each sequence  $(a_i)\in \cC$ assigns 
$a_i$ to the $i$-th variable. In particular the Cantor space is the space of models of the theory of 
$\mathbb{B}$, where clopen sets encode finitary properties.

\,

Finally, the Cantor space is also central to studying automorphism groups of structures (e.g., the automorphism group of a countable dense linear order) and we will see that it plays a central role in Grothendieck's conjecture. 

\medskip 
%%%%%%

\section{$\GT$ as a Cantor Set}\label{S:5}
\subsection{The meta-profinite fundamental group}
\subsubsection{}
We will show that a key aspect of the topological structure of $\GT$, which has significant implications for its arithmetic and geometric properties, is that it can be identified, in the context of profinite spaces, to a Cantor set. This statement mirrors a similar structure in the absolute Galois group $\Gal$, showing that in the category of profinite spaces, these two objects are isomorphic.  

\medskip

\subsubsection{} The meta-profinite fundamental group associated with the Teichmüller tower of moduli spaces refers to an inverse system of profinite fundamental groups of moduli spaces of curves. The latter encodes the asymptotic and universal Galois-theoretic properties of these spaces. It is a conceptual refinement of the classical profinite fundamental group, capturing the structure of the entire Teichmüller tower.

\smallskip 

Consider the tower of moduli spaces $\mathcal{M}_{0,n},$ where 
$\mathcal{M}_{0,n}$	 is the moduli space of genus zero curves with 
$n$ marked points. Each of these spaces has an \'etale fundamental group 
$\pi_1^{et}(\mathcal{M}_{0,n})$, which arises as the profinite completion of the topological fundamental group 
$\pi_1(\mathcal{M}_{0,n})$.

\begin{dfn}
The meta-profinite fundamental group, denoted by  $\Pi_{meta}$, is defined as the inverse limit of the system of profinite fundamental groups over the entire Teichmüller tower: 
\[\Pi_{meta}=\varprojlim_{n\geq 4}\widehat{\pi}_1(\mathcal{M}_{0,n}).\]
\end{dfn}

\medskip

Here, the inverse system is structured by natural maps between different moduli spaces, coming from forgetful:
\[\mathcal{M}_{0,n+1}\to \mathcal{M}_{0,n}\] and gluing operations.
One of the central ideas in Grothendieck's {\it Esquisse d'un Programme} was to study the absolute Galois group $\Gal$ via its action on fundamental groups of algebraic varieties, particularly the fundamental groups of moduli spaces.

A key result is that the automorphism group of this meta-profinite fundamental group defines the Grothendieck–Teichmüller group:

\[\GT={\rm Aut}(\Pi_{meta}),\] and captures a universal symmetry of the Teichmüller tower. It contains $\Gal$ as a subgroup. The Grothendieck--Teichmüller group 
$\GT$  is conjectured to be (essentially) $\Gal$.

\,

\subsection{The Grothendieck--Teichmüller Group}
The profinite Grothendieck-Teichmüller group $\GT$  is a universal profinite symmetry group governing $\Gal$-equivariant deformations of structures tied to the geometric étale fundamental group (e.g., outer automorphisms, associators). Its defining relations encode obstructions to coherence, ensuring compatibility with the arithmetic of moduli spaces of curves and braided tensor categories. As a profinite group, $\GT$ is compact, totally disconnected, and contains the absolute Galois group. Its elements act on étale fundamental groups and moduli spaces, mediating deformations constrained by the pentagon and hexagon equations
 \cite{Dr}.
\,

To demonstrate this conjecture in the context of profinite spaces, we first show that the Grothendieck--Teichmüller group 
$\GT$ is an object of the category {\bf PolGr}.

\smallskip

The Grothendieck--Teichmüller group can be more prosaically included in a short exact sequence involving automorphism groups of braid and free profinite groups:

\[1\to \widehat{GT}\to {\rm Aut}(\widehat{B}_4)\to {\rm Out}(\widehat{F}_2),\]
where $\widehat{B}_4$ is the profinite braid group on four strands and $\widehat{F}_2 $ is its profinite completion of the free group $F_2$ in two generators. The map to ${\rm Out}(\widehat{F}_2)$ (outer automorhisms) factors through ${\rm Aut}(\widehat{F}_2)$.

\smallskip

$\widehat{GT}$ acts on $\widehat{F}_2$, by automorphisms that preserve profinite versions of the pentagon and hexagon equations. It ensures the coherence with the action of the absolute Galois group $\Gal$ on etale fundamental groups. This action being faithful (injective), it means that $\widehat{GT}$ embeds into ${\rm Aut}(\widehat{F}_2)$. 

\, 

Therefore formally, $\widehat{GT}$ is a subgroup of the automorphism group of  $\widehat{F}_2$, i.e. $\GT\subset {\rm Aut}(\widehat{F}_2)$ (see \cite[Ch.5, Sec. 5.3]{LS}).

\subsection{The group $\widehat{F}_2$ is Polish}
We  first show that ${\rm Aut}(\widehat{F}_2)$ is a Polish group. 

\begin{lem}\label{L:F2P}

Let $\mathbf{ProFin}$ denote the category of profinite groups with continuous homomorphisms, and let $\mathbf{PolGrp}$ denote the category of Polish groups with continuous group homomorphisms. The profinite completion functor
\[
\widehat{(\cdot)}: \mathbf{Grp} \to \mathbf{ProFin}
\]
applied to the free group $F_2$ on two generators produces the profinite group $\widehat{F}_2$. Then, under the natural inclusion functor
\[
\mathbf{ProFin} \hookrightarrow \mathbf{PolGrp},
\]
the group $\widehat{F}_2$ is an object in $\mathbf{PolGrp}$, i.e., it admits a compatible Polish topology, making it a Polish group.
\end{lem}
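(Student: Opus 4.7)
The plan is to argue along the same lines as the proof that $\Gal$ is Polish, namely to show that $\widehat{F}_2$ is a compact Hausdorff topological group admitting a countable basis of open neighborhoods of the identity, and then invoke Urysohn's metrization theorem together with the fact that a compact metric space is automatically complete and separable.

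First I would recall the construction of $\widehat{F}_2$ as the cofiltered limit
\[
\widehat{F}_2 \;=\; \varprojlim_{N \trianglelefteq F_2,\ [F_2:N]<\infty} F_2/N,
\]
equipped with the inverse limit topology induced from the discrete topology on each finite quotient $F_2/N$. Standard facts about profinite groups (cf.\ the discussion in Section~\ref{S:1}) immediately give that $\widehat{F}_2$ is a compact, Hausdorff, totally disconnected topological group, and the continuity of the group operations is inherited from the continuity of the operations on each $F_2/N$.

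The key observation, and the main technical step, is that the indexing system of finite index normal subgroups $N \trianglelefteq F_2$ is countable. This is where the finite generation of $F_2$ is essential: for any finite group $H$, the set $\operatorname{Hom}(F_2,H)$ is in bijection with $H \times H$ (by evaluating on the two generators), hence finite; since there are only countably many isomorphism classes of finite groups, the collection of surjective homomorphisms from $F_2$ to finite groups is countable, and therefore so is the collection of finite index normal subgroups of $F_2$. Consequently the clopen subgroups $\ker(\widehat{F}_2 \to F_2/N)$ form a countable neighborhood basis of the identity in $\widehat{F}_2$, and translating by group elements yields a countable basis for the topology. This is the step I would expect to require the most care to state cleanly, since one must verify that the chosen system really does give a neighborhood basis at every point, not merely at the identity.

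Once countable second-countability is established, the rest is routine: being a compact Hausdorff second countable space, $\widehat{F}_2$ is metrizable by Urysohn's theorem. Any compact metric space is complete in any compatible metric and admits a countable dense subset, so $\widehat{F}_2$ is separable and completely metrizable. Combined with the continuity of multiplication and inversion, this exhibits $\widehat{F}_2$ as an object of $\mathbf{PolGrp}$, and the inclusion functor $\mathbf{ProFin} \hookrightarrow \mathbf{PolGrp}$ is compatible with this topology by construction.
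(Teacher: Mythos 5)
Your proposal follows essentially the same route as the paper: both establish that $\widehat{F}_2$ is compact, Hausdorff, totally disconnected, and second countable, then invoke Urysohn's metrization theorem and the completeness of compact metric spaces to conclude that it is Polish. The one genuine difference lies in how second countability is justified. The paper cites Wilson's book (\cite[Lem.~4.1.2 and Prop.~4.1.3]{Wl}): Lemma 4.1.2 bounds the number of open normal subgroups of index $r$ in a finitely generated profinite group by $r^{r^2+2}$, and Proposition 4.1.3 converts the countability of open normal subgroups into a countable base. You instead give a self-contained counting argument tailored to $F_2$: since $\operatorname{Hom}(F_2,H)$ is in bijection with $H \times H$ for any finite group $H$, and there are only countably many isomorphism classes of finite groups, the collection of finite-index normal subgroups of $F_2$ is countable. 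Your version is more elementary and avoids the external citation; it buys transparency for this specific case, at the cost of not generalizing as smoothly to arbitrary topologically finitely generated profinite groups, which Wilson's bound handles uniformly. One small imprecision worth repairing: ``translating by group elements yields a countable basis'' is not literally correct as stated, since $\widehat{F}_2$ has uncountably many elements. The cleaner argument is that each open normal subgroup, being open in a compact group, has only finitely many cosets, so the union over the countably many open normal subgroups of their (finitely many) cosets is a countable basis of clopen sets --- which is precisely the content of the proposition the paper cites from Wilson.
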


\begin{proof}

A Polish group is a topological group that is separable and completely metrizable. We will show that $\widehat{F}_2$ satisfies these conditions.

\, 

Consider the profinite completion of the free group, $\F_2$. There is a natural homomorphism $i_{F_2}:F_2\to \widehat{F}_2$, which is defined by the universal property of profinite completions. $\widehat{F}_2$ is the inverse limit of all finite quotients of $F_2$.
Since $F_2$ is residually finite and finitely generated, its profinite completion is compact, Hausdorff, and totally disconnected, satisfying the definition of a profinite group.

\,

For a finitely generated group such as $F_2$ the set of finite-index normal subgroups is countable. Given $n\in \mathbb{N}$, there are finitely many normal subgroups of $F_2$ with index $n$. Moreover, the union over all $n$ forms a countable set. To be more specific, \cite[Lem 4.1.2]{Wl} states that if a profinite group is generated by a finite set, then for any integer $r$ , there are at most $r^{r^2+2}$ open normal subgroups of index $r$ in $\widehat{F}_2$. Therefore, it follows that $\widehat{F}_2$ has only countably many open normal subgroups.

\, 

The profinite topology on $\widehat{F}_2$ has a basis of open sets derived from these finite quotients. Since there are countably many finite quotients, the topology is second-countable. Therefore, applying \cite[Prop. 4.1.3]{Wl} (a profinite group with only countably many open normal subgroups has a countable base of open sets) implies that  $\widehat{F}_2$ is second countable.

Finally, by Urysohn’s metrization theorem, a second-countable, compact Hausdorff space is metrizable. 
By the above arguments, $\widehat{F}_2$ is compact Hausdorff and second-countable, so it is metrizable. Moreover, compact metric spaces are complete. Any metric on a compact space is naturally complete. Since 
$\widehat{F}_2$ is compact and metrizable, it is completely metrizable.

Since we have already shown that  $\widehat{F}_2$ is second countable, completely metrizable profinite group, it follows that $\widehat{F}_2$
is a Polish group.
\end{proof}
%%%

\subsection{The group ${\rm Aut}(\widehat{F}_2)$ is Polish}
We prove the following standard fact from profinite group theory.

\begin{lem}\label{L:AutF2}
Let $\mathbf{ProFin}$ denote the category of profinite groups with continuous homomorphisms, and let $\mathbf{PolGrp}$ denote the category of Polish groups with continuous group homomorphisms. Consider the automorphism group functor
\[
\operatorname{Aut}(-): \mathbf{ProFin} \to \mathbf{Grp}.
\]
For the profinite free group $\widehat{F}_2$, the group $\operatorname{Aut}(\widehat{F}_2)$ naturally inherits a topology as an inverse limit of automorphism groups of finite quotients. This gives rise to a functorial assignment
\[
\operatorname{Aut}(-): \mathbf{ProFin} \to \mathbf{ProFin} \cap \mathbf{PolGrp},
\]
meaning that $\operatorname{Aut}(\widehat{F}_2)$ is both an object in $\mathbf{ProFin}$ and in $\mathbf{PolGrp}$. In particular, it is a Polish and profinite group.
\end{lem}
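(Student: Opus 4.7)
The plan is to realize $\operatorname{Aut}(\widehat{F}_2)$ as a cofiltered limit of finite automorphism groups of characteristic quotients of $\widehat{F}_2$, and then to upgrade this profinite structure to a Polish one using the countability established in Lemma~\ref{L:F2P}.

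First, I would fix the indexing system. By Lemma~\ref{L:F2P} the profinite group $\widehat{F}_2$ admits only countably many open normal subgroups. Since $\widehat{F}_2$ is topologically finitely generated, for every integer $r$ there are only finitely many open subgroups of index $r$, and hence their intersection is an open characteristic subgroup. Thus the family $\mathcal{N}$ of open characteristic subgroups of $\widehat{F}_2$ is countable and cofinal in the directed system of open normal subgroups, and $\bigcap_{N\in\mathcal{N}} N = \{e\}$. For every $N\in\mathcal{N}$ and every continuous $\phi \in \operatorname{Aut}(\widehat{F}_2)$, characteristicness gives $\phi(N)=N$, so $\phi$ descends to a well-defined $\phi_N \in \operatorname{Aut}(\widehat{F}_2/N)$, producing a canonical homomorphism
\[
\Phi: \operatorname{Aut}(\widehat{F}_2) \longrightarrow \varprojlim_{N\in\mathcal{N}} \operatorname{Aut}(\widehat{F}_2/N).
\]

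Second, I would argue that $\Phi$ is a topological isomorphism when both sides carry the congruence (pointwise convergence) topology. Injectivity is immediate from $\bigcap_N N = \{e\}$: two continuous automorphisms agreeing on every $\widehat{F}_2/N$ coincide on the dense image of $F_2$, hence everywhere. For surjectivity, a coherent system $(\psi_N)$ in the inverse limit assembles by the universal property of $\widehat{F}_2 = \varprojlim \widehat{F}_2/N$ into a continuous endomorphism whose two-sided inverse is constructed from $(\psi_N^{-1})$. Since each $\widehat{F}_2/N$ is finite, each $\operatorname{Aut}(\widehat{F}_2/N)$ is finite and discrete, so the right-hand side is an object of $\mathbf{ProFin}$. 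Transporting this structure along $\Phi$ shows $\operatorname{Aut}(\widehat{F}_2) \in \mathbf{ProFin}$, with a basis of open neighborhoods of the identity given by the congruence kernels $K_N = \{\phi : \phi|_{\widehat{F}_2/N} = \mathrm{id}\}$, one for each $N\in\mathcal{N}$.

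Third, to land in $\mathbf{PolGrp}$ I would repeat the second-countability argument from Lemma~\ref{L:F2P}: since $\mathcal{N}$ is countable, the family $\{K_N\}_{N\in\mathcal{N}}$ furnishes a countable basis at the identity, and translating yields a countable basis for the full topology. Combined with the profinite (hence compact Hausdorff) structure, Urysohn's metrization theorem gives metrizability; a compact metric space is automatically complete and separable, so $\operatorname{Aut}(\widehat{F}_2)$ is Polish. Functoriality of the construction $G \mapsto \operatorname{Aut}(G)$ on topologically finitely generated profinite groups is routine once the inverse limit description is in hand.

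The main obstacle is the identification $\Phi$ in the second step, and specifically reconciling ``continuous automorphism of $\widehat{F}_2$'' with ``compatible system of automorphisms of finite characteristic quotients.'' Lifting a coherent family $(\psi_N)$ to a continuous $\phi$ requires that the inverse limit construction respects the group structure on both directions, which is clean once one knows characteristic subgroups are preserved; the potential pitfall of distinguishing abstract automorphisms from continuous ones is defused by the Nikolov--Segal theorem, since $\widehat{F}_2$ is topologically finitely generated, so every abstract automorphism is automatically continuous and the two notions of $\operatorname{Aut}$ agree.
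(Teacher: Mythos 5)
Your proof is correct, and it follows the same overall strategy as the paper --- realize $\operatorname{Aut}(\widehat{F}_2)$ as an inverse limit of finite automorphism groups, then deduce the profinite and Polish properties from second countability and compactness --- but you execute it with greater care at the one place where the paper's argument actually has a gap. The paper writes
\[
\operatorname{Aut}(\widehat{F}_2)\;=\;\varprojlim_{N \,\triangleleft\, \widehat{F}_2}\operatorname{Aut}(\widehat{F}_2/N)
\]
over \emph{all} open normal subgroups $N$, with the word ``thus'' doing the work. This is not well-posed: an automorphism of $\widehat{F}_2$ need not stabilize an arbitrary open normal subgroup $N$, so there is no natural map $\operatorname{Aut}(\widehat{F}_2)\to\operatorname{Aut}(\widehat{F}_2/N)$ for non-characteristic $N$, nor are the transition maps between $\operatorname{Aut}(\widehat{F}_2/N')$ and $\operatorname{Aut}(\widehat{F}_2/N)$ defined in general; and $\operatorname{Aut}(-)$ does not commute with cofiltered limits. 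You avoid this entirely by passing to the cofinal, countable system $\mathcal{N}$ of open \emph{characteristic} subgroups (e.g., intersections of all open normals of index $\le r$, which are open by topological finite generation), for which the descent of automorphisms and the coherence of the diagram are automatic. Your injectivity/surjectivity argument for $\Phi$, and the Urysohn/compactness step for the Polish property, then close the argument cleanly; the appeal to Nikolov--Segal to reconcile abstract and continuous automorphisms is a welcome extra that the paper does not address. In short, you reach the same conclusion by the same route, but your restriction to characteristic subgroups repairs an imprecision in the paper's proof rather than inheriting it.
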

\begin{proof}
We establish that the automorphism group $\operatorname{Aut}(\widehat{F}_2)$  of the profinite free group $\widehat{F}_2$ on two generators is both a profinite group and a Polish group.  This is achieved by demonstrating that $\operatorname{Aut}(\widehat{F}_2)$ can be expressed as the inverse limit of the automorphism groups of its finite quotients, thereby inheriting a natural topology that is compact, totally disconnected, and metrizable. Consequently, 
$\operatorname{Aut}(\widehat{F}_2)$ resides in the intersection of the categories of profinite and Polish groups. 

\medskip 

A profinite group is a topological group that is isomorphic to the inverse limit of an inverse system of finite discrete groups. Such groups are compact, totally disconnected, and Hausdorff. A Polish group is a topological group that is separable and completely metrizable. The intersection of these categories consists of groups that are both profinite and Polish, implying compactness, total disconnectedness, metrizability, and separability.

\smallskip 

Consider the profinite free group  $\widehat{F}_2$ on  two generators. By definition,  $\widehat{F}_2$ is the inverse limit of its finite quotients:
\[\widehat{F}_2=\varprojlim_{N\,\, \triangleleft\,\,  \widehat{F}_2} \widehat{F}_2/N\]
where the limit is taken over all open normal subgroups $N$ of  $\widehat{F}_2$. Each quotient  $\widehat{F}_2/N$ is a finite group, and its automorphism group ${\rm Aut}(\widehat{F}_2/N)$ is also finite. The automorphism group $\operatorname{Aut}(\widehat{F}_2)$  can thus be expressed as the inverse limit

\[\operatorname{Aut}(\widehat{F}_2)=\varprojlim_{N\,\, \triangleleft\,\,  \widehat{F}_2} {\rm Aut} (\widehat{F}_2/N).\]
In this inverse limit, $\operatorname{Aut}(\widehat{F}_2)$  inherits the inverse limit topology,  making it a closed subgroup of the product $\prod\limits_N {\rm Aut} (\widehat{F}_2/N)$.

Since each ${\rm Aut} (\widehat{F}_2/N)$ is finite and discrete, their product, equipped with the product topology, is compact and totally disconnected. Therefore $\operatorname{Aut}(\widehat{F}_2)$, as a closed subgroup of this product, is also compact and totally disconnected, satisfying the definition of a profinite group.

\medskip 

To establish that $\operatorname{Aut}(\widehat{F}_2)$ is a Polish group,  we note that it is metrizable due to its construction as an inverse limit of finite discrete groups. Moreover,  $\operatorname{Aut}(\widehat{F}_2)$ is separable, as it contains a countable dense subset arising from its topological generation by a finite set.  Compactness implies complete metrizability. Thus, $\operatorname{Aut}(\widehat{F}_2)$ is a Polish group.

The functorial assignment 
\[
\operatorname{Aut}(-): \mathbf{ProFin} \to \mathbf{ProFin} \cap \mathbf{PolGrp},
\] is valid, and $\operatorname{Aut}(\widehat{F}_2)$ resides in the intersection of the categories of profinite and Polish groups. 

\end{proof}

Note that for non-finitely generated profinite groups the situation can be more complicated.

\subsection{Is $\GT$ a Polish Group?}

The fact that $\GT\in {\rm Ob}({\bf ProFin})$ implies that  $\GT$ is endowed with the {\it profinite} topology.

A crucial question is whether $\GT$ is second-countable (i.e., has a countable basis for its topology).  We establish in this section that  $\GT$ is a Polish group.

\begin{lem}\label{L:csP}
Any compact profinite subgroup of ${\rm Aut}(\widehat{F}_2)$ is a Polish group.
\end{lem}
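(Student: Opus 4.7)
The plan is to deduce this as a short corollary of Lemma~\ref{L:AutF2}, using only standard permanence properties of Polish spaces under the passage to closed subspaces. I would first recall that by Lemma~\ref{L:AutF2}, the ambient group $\operatorname{Aut}(\widehat{F}_2)$ is both profinite and Polish; in particular, it is Hausdorff and admits a complete separable metric compatible with its topology.

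Next, let $H \leq \operatorname{Aut}(\widehat{F}_2)$ be a compact profinite subgroup. Since $\operatorname{Aut}(\widehat{F}_2)$ is Hausdorff and $H$ is compact, $H$ is automatically closed in $\operatorname{Aut}(\widehat{F}_2)$. I would then invoke the classical fact from descriptive set theory (cf.\ \cite{Gao}) that a closed subspace of a Polish space is Polish: the restriction to $H$ of any complete compatible metric on $\operatorname{Aut}(\widehat{F}_2)$ remains complete, and separability is inherited by any subspace of a second-countable space. Thus $H$ is separable and completely metrizable in the subspace topology.

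Finally, I would observe that the group operations on $H$, namely multiplication and inversion, are continuous as restrictions of the continuous group operations on $\operatorname{Aut}(\widehat{F}_2)$. Together with the above, this makes $H$ a Polish group. As a sanity check, one can give a second, intrinsic argument: $H$ is profinite and therefore compact, Hausdorff, and totally disconnected; being a compact subspace of a metrizable space it is itself metrizable and compact metric spaces are automatically complete and separable, hence Polish.

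I do not anticipate a serious obstacle here, as the statement is essentially a permanence property. The only point that deserves a brief verification is the compatibility of the two topologies on $H$, the subspace topology from $\operatorname{Aut}(\widehat{F}_2)$ and the profinite topology of $H$ as an inverse limit of finite quotients, but since both are compact Hausdorff group topologies on the same underlying group, they must coincide, and this removes any ambiguity in speaking about $H$ as a Polish group.
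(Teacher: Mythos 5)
Your proof is correct and follows essentially the same route as the paper's: compactness of $H$ forces it to be closed in the Hausdorff group $\operatorname{Aut}(\widehat{F}_2)$, which is Polish by Lemma~\ref{L:AutF2}, and closed subgroups of Polish groups are Polish, with completeness, separability, and continuity of the group operations all inherited. One small caveat on your closing sanity check: the claim that two compact Hausdorff group topologies on the same underlying group must coincide is false in general (one can transport a compact group topology by a discontinuous abstract automorphism to get a different one); the reason there is no ambiguity here is rather that a compact subgroup of $\operatorname{Aut}(\widehat{F}_2)$ is closed, and a closed subgroup of a profinite group with its subspace topology \emph{is} its profinite topology, so there was never a second topology to reconcile.
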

\begin{proof}
Recall that a profinite group is compact, Hausdorff, and totally disconnected. For a profinite group to be Polish, it must also be second-countable (i.e., its topology has a countable base) and metrizable.
The automorphism group ${\rm Aut}(\widehat{F}_2)$ being a Polish group, is second-countable and completely metrizable.

\, 

Closed subgroups of Polish groups are Polish. Since 
${\rm Aut}(\widehat{F}_2)$ is Polish, any closed subgroup of it is also Polish.

\, 

Compact subsets of Hausdorff spaces (like Polish groups) are closed. Thus, a compact profinite subgroup of 
${\rm Aut}(\widehat{F}_2)$ is necessarily closed and inherits the Polish properties.

We now explain why the subgroup is Polish.
As a closed subgroup of the second-countable group 
${\rm Aut}(\widehat{F}_2)$, the subgroup is second-countable.
Furthermore, profinite groups are metrizable if they are second-countable. The subgroup inherits a compatible metric from 
${\rm Aut}(\widehat{F}_2)$. Finally, compactness ensures completeness with respect to the metric.

\, 

Therefore, any compact profinite subgroup of 
${\rm Aut}(\widehat{F}_2)$ is a closed, second-countable, and completely metrizable topological group, hence Polish. This aligns with the general principle that compact subgroups of Polish groups are Polish.
\end{proof}

\begin{prop}\label{P:GTPol}
 The profinite group $\GT$ is a Polish group.     
\end{prop}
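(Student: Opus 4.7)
The plan is to realize $\widehat{GT}$ as a compact profinite subgroup of $\operatorname{Aut}(\widehat{F}_2)$ and then invoke Lemma~\ref{L:csP} directly. The strategy therefore reduces to three verifications, none of which should be independently difficult given what has already been set up in the excerpt: first, that $\widehat{GT}$ embeds into $\operatorname{Aut}(\widehat{F}_2)$; second, that this embedding identifies $\widehat{GT}$ with a \emph{closed} subgroup; and third, that the ambient Polish structure restricts to it.

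For the first step I would invoke the faithful action of $\widehat{GT}$ on $\widehat{F}_2$ by automorphisms preserving the profinite pentagon and hexagon relations, which gives the inclusion $\widehat{GT}\hookrightarrow\operatorname{Aut}(\widehat{F}_2)$ recorded just before Lemma~\ref{L:F2P}. For the second step, since $\widehat{GT}$ is an object of $\mathbf{ProFin}$, it is in particular compact; because $\operatorname{Aut}(\widehat{F}_2)$ is Hausdorff by Lemma~\ref{L:AutF2}, any compact subset is closed, so the image of $\widehat{GT}$ is a closed subgroup of $\operatorname{Aut}(\widehat{F}_2)$. One should also briefly note that the inclusion is a homeomorphism onto its image: this follows from the standard fact that a continuous bijection from a compact space to a Hausdorff space is a homeomorphism, so no pathology in the subspace topology intervenes.

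For the third step, I would apply Lemma~\ref{L:csP} verbatim: a compact profinite subgroup of $\operatorname{Aut}(\widehat{F}_2)$ is Polish. Concretely, $\widehat{GT}$ inherits second-countability and a compatible complete metric from $\operatorname{Aut}(\widehat{F}_2)$, while compactness ensures that the restricted metric is complete; separability follows from second-countability. Hence $\widehat{GT}$ is separable and completely metrizable as a topological group, i.e.\ an object of $\mathbf{PolGrp}$.

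The only genuine obstacle I anticipate is making sure that the pentagon/hexagon relations cut out a closed (and not merely measurable or constructible) subset of $\operatorname{Aut}(\widehat{F}_2)$, so that the image of $\widehat{GT}$ is truly closed rather than just compact in some a priori weaker sense. This is handled by observing that the defining equations $f(x,y)f(y,x)=1$ and $f(z,x)z^{m}f(y,z)y^{m}f(x,y)x^{m}=1$ are continuous conditions on the automorphism evaluated on the generators of $\widehat{F}_2$, so the solution set is an intersection of preimages of closed points under continuous maps into $\widehat{F}_2$, hence closed. Once this is in place, the argument becomes the short compact-subgroup-of-a-Polish-group application outlined above.
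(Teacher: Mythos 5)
Your proof follows essentially the same route as the paper's: embed $\GT$ into the Polish group $\operatorname{Aut}(\widehat{F}_2)$ (Lemma~\ref{L:AutF2}), use compactness of the profinite group $\GT$ together with the Hausdorff property of the ambient group to conclude that the image is closed, and then apply the fact that closed subgroups of Polish groups are Polish (Lemma~\ref{L:csP}). The closing paragraph worrying about whether the pentagon and hexagon relations cut out a closed subset is harmless but redundant: once you have noted, as you do, that a continuous injection from the compact profinite group $\GT$ into the Hausdorff space $\operatorname{Aut}(\widehat{F}_2)$ has compact --- hence closed --- image, no further verification of closedness is needed.
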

\begin{proof}

\, 

First, the profinite $\GT$ is a subgroup $Aut(\widehat{F}_2)$, which was shown to be a Polish group in Lem~\ref{L:AutF2}. Furthermore, by construction, the group $\GT$ is the inverse limit of its own images in the (finite) automorphism groups of the quotients $\widehat{F}_2/N$, where $N$ runs over the normal subgroups of finite index of $\widehat{F}_2$.

\medskip

From the definition of a profinite group it follows that $\GT$ is {\it compact}. In fact, {\it every} profinite group is compact Hausdorff. 
In any Hausdorff topological group---which all Polish groups are---every compact subgroup is closed. Profinite groups are by definition inverse limits of finite groups and are therefore compact. Thus, when a profinite group is embedded as a subgroup in a Polish group, it is necessarily a closed subgroup because its compactness guarantees that property in a Hausdorff space.
Therefore, $\GT$ is closed in the metrizable Polish space ${\rm Aut}(\widehat{F}_2)$. 

\, 

Now, a Polish group is a topological group whose topology is given by a complete metric that is separable. So, every closed subgroup of a Polish group is itself a Polish group (see for instance \cite[p.204]{Kh2}) and Lem~\ref{L:csP}. This statement allows us to deduce that $\GT$ is a Polish group. Therefore, $\GT$  is a Polish group and inherits the Polish property.

\medskip 

In particular, such a closed subgroup inherits the following properties:
\begin{itemize}
\item {\bf Completeness:} In any complete metric space, every closed subset is complete with respect to the induced metric. Therefore,  given a Polish group (a complete metric space) the closed subgroup will also be complete.

\item {\bf Separability:} A subspace of a separable space is also separable. Since Polish groups are separable, the closed subgroup will inherit separability from the larger group.
\item {\bf Group Structure:} Since the subgroup is closed and the group operations are continuous, the subgroup with the subspace topology remains a topological group.
\end{itemize}
\end{proof}

The Grothendieck--Teichmüller group is therefore a {\it second-countable profinite group}. This follows from its construction as a closed subgroup of second-countable (Polish) automorphism groups and the countability of its defining conditions. Its second-countability aligns with the broader framework of Galois theory and profinite group theory.

\,

This is summarised as following statement. 

\begin{cor}
The Grothendieck--Teichmüller group is a second-countable profinite group.
\end{cor}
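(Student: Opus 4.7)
The plan is to assemble the ingredients already established in the excerpt, since the statement is essentially a corollary of Proposition~\ref{P:GTPol} together with Lemma~\ref{L:AutF2}. The claim has two parts: (i) $\GT$ is profinite, and (ii) $\GT$ is second-countable. Part (i) is built into the definition of $\GT$ as the profinite Grothendieck--Teichmüller group, and has also been used implicitly throughout the section via its realization as an inverse limit of its images in $\mathrm{Aut}(\widehat{F}_2/N)$ as $N$ ranges over open normal subgroups of $\widehat{F}_2$. So the real content lies in part (ii).

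First, I would recall from Lemma~\ref{L:AutF2} that $\mathrm{Aut}(\widehat{F}_2)$ is Polish; in particular it is second-countable, since Polish spaces are by definition separable and completely metrizable, and for metrizable spaces separability is equivalent to second-countability. Next, I would invoke the standard topological fact that second-countability is hereditary: any subspace of a second-countable space is second-countable (a countable basis restricts to a countable basis on the subspace). Applying this to the inclusion $\GT \hookrightarrow \mathrm{Aut}(\widehat{F}_2)$ — which was discussed in the short exact sequence involving $\mathrm{Aut}(\widehat{B}_4)$ and the faithful action on $\widehat{F}_2$ — immediately yields that $\GT$ is second-countable. Combined with (i), this gives the corollary.

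As a sanity check, I would cross-verify this from a second, intrinsic route: by Proposition~\ref{P:GTPol}, $\GT$ is itself a Polish group, hence separable, hence (being metrizable as a Polish group) second-countable. A third viewpoint consists in counting directly the open normal subgroups: $\GT$ sits inside the inverse limit over the open normal subgroups of $\widehat{F}_2$, and Lemma~\ref{L:F2P} (invoking \cite[Lem. 4.1.2]{Wl}) shows that $\widehat{F}_2$ has only countably many such subgroups, so the induced basis of clopen cosets on $\GT$ is countable as well. These three arguments converge and leave no gap.

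There is really no substantial obstacle here — the lemma is a packaging statement — but the one point deserving care is the choice of topology on $\GT$: we are using the subspace topology inherited from $\mathrm{Aut}(\widehat{F}_2)$, and one must confirm this coincides with the profinite topology that comes from viewing $\GT$ as an inverse limit of its finite images. This compatibility is precisely what Lemma~\ref{L:AutF2} ensures, since $\mathrm{Aut}(\widehat{F}_2)$ itself was equipped with the inverse limit topology; closed subgroups of such a limit inherit both the profinite structure and the countable basis simultaneously. Once this identification is stated, the corollary follows in a single line.
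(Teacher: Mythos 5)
Your proof is correct and follows essentially the same route as the paper, which derives the corollary from $\GT$ being a closed subgroup of the second-countable Polish group $\mathrm{Aut}(\widehat{F}_2)$ together with the countability of the defining congruence conditions — precisely your approaches~(1) and~(3), with approach~(2) the immediate restatement of Proposition~\ref{P:GTPol}. The extra remark on confirming that the subspace topology on $\GT$ agrees with its profinite (inverse-limit) topology is a sensible point of care but not something the paper dwells on.
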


\subsection{$\GT$ as a Cantor space}
\begin{prop}\label{prop:gt_cantor_meta}
Let $\Pi_{\mathrm{meta}}$ denote the meta-profinite fundamental group associated to the Teichmüller tower of moduli spaces, viewed as an object in the category $\mathbf{ProFin}$ of profinite spaces. Then the automorphism group
\[
{\rm Aut}(\Pi_{\mathrm{meta}}),
\]
which may be identified with the profinite Grothendieck–Teichmüller group ${\GT}$, is canonically isomorphic in $\mathbf{ProFin}$ to the Cantor space. Equivalently, there is a natural homeomorphism
\[
{\GT}\quad  \myeq\quad \{0,1\}^{\mathbb{N}},
\]
where $\{0,1\}^{\mathbb{N}}$ denotes the unique (up to isomorphism) nonempty, perfect, compact, totally disconnected, metrizable space.
\end{prop}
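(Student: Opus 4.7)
The plan is to reduce the statement to Brouwer's characterization of the Cantor space, already used in the proof of the analogous result for $\Gal$: a nonempty topological space is homeomorphic to $\{0,1\}^{\mathbb{N}}$ if and only if it is compact, Hausdorff, totally disconnected, metrizable, and perfect. Equivalently, by \cite[Thm. 2.6.5]{RZ}, a non-discrete metrizable profinite group is homeomorphic to the Cantor set, and this is the form I would invoke. So the whole strategy is: verify these hypotheses for $\GT$, each via results already assembled earlier in the paper.

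First I would gather the four easy properties. Compactness, Hausdorffness, and total disconnectedness are part of the definition of a profinite group, and $\GT$ is profinite by construction as a closed subgroup of $\operatorname{Aut}(\widehat{F}_2)$ cut out by the pentagon and hexagon relations. Metrizability then follows from Proposition~\ref{P:GTPol} together with its corollary: $\GT$ is a second-countable compact Hausdorff group, so Urysohn's metrization theorem applies. Nonemptiness is immediate, as the identity lies in $\GT$.

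The main, and only nontrivial, step is to show that $\GT$ is perfect, i.e.\ has no isolated points. I would mimic the argument carried out for $\Gal$ in Section~\ref{S:3}. Since $\GT$ is profinite, a basic system of open neighborhoods of any element $\sigma \in \GT$ is given by cosets $\sigma \cdot U$, where $U$ ranges over open (hence clopen) normal subgroups of $\GT$ — concretely, the kernels of the maps $\GT \to {\rm Aut}(\widehat{F}_2/N)$ as $N$ runs over open normal subgroups of $\widehat{F}_2$. Each such $U$ has finite index, so if $U$ were trivial, $\GT$ would be finite and discrete. But $\GT$ is infinite: this follows from Drinfel'd's embedding $\Gal \hookrightarrow \GT$ together with the well-known fact that $\Gal$ is infinite (e.g.\ because it surjects onto every finite Galois group $\operatorname{Gal}(L/\Q)$, including arbitrarily large ones from the cyclotomic tower). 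Hence every basic clopen neighborhood $\sigma \cdot U$ of $\sigma$ contains at least two elements, ruling out $\{\sigma\}$ as an open set. Therefore no point of $\GT$ is isolated.

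The only genuine obstacle I anticipate is a subtle one in the perfectness step: one must make sure that infinitely many of the defining clopen subgroups $U$ remain nontrivial upon restriction to $\GT$ itself, not merely as subgroups of $\operatorname{Aut}(\widehat{F}_2)$. This is exactly what the embedding $\Gal \hookrightarrow \GT$ guarantees, since the images in successive finite quotients already grow without bound; a cleaner way to phrase it is that $\GT$ cannot be finite because it surjects onto an infinite tower of nontrivial finite groups via the Galois action on $\widehat{F}_2/N$. Once this is laid out, applying \cite[Thm. 2.6.5]{RZ} (or equivalently Brouwer's theorem) yields the asserted homeomorphism $\GT \myeq \{0,1\}^{\mathbb{N}}$, and the identification $\GT = {\rm Aut}(\Pi_{\mathrm{meta}})$ from the preceding subsection completes the statement.
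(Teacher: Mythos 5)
Your proposal is correct and reduces, like the paper, to Brouwer's characterization of the Cantor space (equivalently \cite[Thm.~2.6.5]{RZ}): verify compactness, total disconnectedness, metrizability, nonemptiness, and perfectness, then invoke the uniqueness of $\{0,1\}^{\mathbb{N}}$ within that class. The one genuine difference is in how perfectness is handled. The paper's proof is terse on this point — after noting that $\operatorname{Aut}(\Pi_{\mathrm{meta}})$ inherits a profinite structure from the inverse system of finite covers and that the system is countable (giving a countable clopen basis), it simply asserts that ``standard arguments in the theory of profinite spaces then imply that $\GT$ is compact, totally disconnected, and perfect'' and cites the classification of \cite{K}. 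You, by contrast, supply an explicit justification of the only nontrivial hypothesis: you mimic the argument given for $\Gal$ in Section~\ref{S:3}, observing that in a profinite group an isolated point would force an open normal subgroup to be trivial, hence the group to be finite, and you then appeal to Drinfel'd's embedding $\Gal \hookrightarrow \GT$ together with the infinitude of $\Gal$ to rule this out. This buys you a self-contained proof of perfectness where the paper relies on an unnamed standard fact; your version is the more careful one, and the anticipated subtlety you flag — making sure the relevant clopen subgroups remain nontrivial once restricted to $\GT$ rather than to all of $\operatorname{Aut}(\widehat{F}_2)$ — is exactly the right thing to worry about, and the Drinfel'd embedding is the cleanest way to dispose of it.
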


\begin{proof}
By construction, the meta-profinite fundamental group $\Pi_{\mathrm{meta}}$ arises as the inverse limit of the finite discrete groups corresponding to the profinite completions of the fundamental groups of the moduli spaces $\mathcal{M}_{0,n}$. Its automorphism group, endowed with the topology of uniform convergence on the inverse system, inherits a profinite structure.

Since the system of finite coverings involved is countable, the resulting topology on $\GT\cong{\rm Aut}(\Pi_{\mathrm{meta}})$ possesses a countable basis of clopen sets. Standard arguments in the theory of profinite spaces then imply that ${\GT}$ is compact, totally disconnected, and perfect. By the uniqueness (up to homeomorphism) of the Cantor space within the class of nonempty, compact, metrizable, perfect, and totally disconnected spaces, we obtain by \cite{W} that the only possibility is the asserted homeomorphism:
\[
{\GT}\quad  \myeq\quad  \{0,1\}^{\mathbb{N}}.
\]
\end{proof}
 
The homeomorphism $\GT\quad \myeq\quad \cC$ establishes a deep duality between the symmetries of the Teichmüller tower, encoded by 
$\GT$ and the logical/combinatorial structure of $\mathbb{B}$. 
\begin{cor}
In the category of profinite spaces, the profinite 
Grothendieck--Teichmüller group 
$\GT$ is the Stone space of $\mathbb{B}$, i.e. 
\[\GT={\rm Spec}(\mathbb{B}).\] In particular 
\[{\rm Clopen}(\GT)\quad \myeq \quad \mathbb{B}. \]

\end{cor}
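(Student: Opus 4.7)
The plan is to derive this corollary as a direct consequence of Proposition~\ref{prop:gt_cantor_meta} together with the Stone duality reviewed at the beginning of Section~\ref{S:1}. No genuinely new input is required; the only task is to transport the homeomorphism $\GT \myeq \{0,1\}^{\mathbb{N}}$ through the contravariant equivalence $\mathbf{StoneSpace} \simeq \mathbf{BoolAlg}^{\mathrm{op}}$ and identify the resulting Boolean algebra with $\mathbb{B}$.

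First, I would recall that the profinite group $\GT$ is in particular a Stone space (compact, Hausdorff, and totally disconnected), so Stone duality applies and yields a canonical Boolean algebra $\operatorname{Clopen}(\GT)$, together with a canonical homeomorphism $\GT \cong \operatorname{Spec}(\operatorname{Clopen}(\GT))$. Next, I would invoke Proposition~\ref{prop:gt_cantor_meta} to obtain $\GT \myeq \cC$ in $\mathbf{ProFin}$, so that the induced Boolean algebra isomorphism
\[
\operatorname{Clopen}(\GT) \;\cong\; \operatorname{Clopen}(\cC)
\]
holds via the functoriality of the $\operatorname{Clopen}(-)$ functor on Stone spaces.

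Then I would appeal to the characterization (also recalled in Section~\ref{S:1}) that the Cantor set $\cC$ corresponds under Stone duality precisely to the countable atomless Boolean algebra, i.e.\ $\operatorname{Clopen}(\cC) \cong \mathbb{B}$, where $\mathbb{B}$ is the unique (up to isomorphism) such algebra. Combining these two isomorphisms yields $\operatorname{Clopen}(\GT) \cong \mathbb{B}$, and consequently
\[
\GT \;\cong\; \operatorname{Spec}(\operatorname{Clopen}(\GT)) \;\cong\; \operatorname{Spec}(\mathbb{B}),
\]
establishing both assertions of the corollary.

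The only subtle point, and the one I would flag explicitly rather than treat as a true obstacle, is that the homeomorphism $\GT \myeq \cC$ is not canonical, so the isomorphism $\operatorname{Clopen}(\GT) \cong \mathbb{B}$ is also not canonical as a choice of identification; however, the existence of such an isomorphism is canonical because $\mathbb{B}$ is unique up to isomorphism among countable atomless Boolean algebras. I would emphasize that this matches the comment made in the introduction regarding the non-canonical nature of the Cantor identification and the consequent need to fix a convention (the Cubic Matrioshka construction) in order to extract meaningful combinatorial data from individual elements of $\GT$.
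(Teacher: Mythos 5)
Your proposal is correct and follows the same route the paper intends: the corollary is asserted as an immediate consequence of Proposition~\ref{prop:gt_cantor_meta} combined with the Stone-duality fact from Section~\ref{S:1} that $\cC\cong\operatorname{Spec}(\mathbb{B})$ with $\mathbb{B}$ the unique countable atomless Boolean algebra. Your remark that the identification $\operatorname{Clopen}(\GT)\cong\mathbb{B}$ is non-canonical (only its existence is) is accurate and consistent with the paper's own caveats about the non-canonicity of the Cantor homeomorphism.
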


\, 
\subsection{The Grothendieck--Teichmüller conjecture}
The Grothendieck--Teichmüller group $\GT$ and the absolute Galois group $\Gal$, are homeomorphic to the Cantor set.
This result is due to an application of the classification in \cite{W} and since $\GT$ and $\Gal$ groups are compact, totally disconnected, Polish and perfect, they satisfy the key conditions for a Polish group to be homeomorphic to a Cantor set.

\,

 \begin{thm}\label{T:Cantor}
 In the category of profinite spaces, the Grothendieck--Teichm\"uller conjecture holds i.e. $\GT$ and $\Gal$ are both homeomorphic to a Cantor set, i.e. 
\[\Gal\quad \myeq \quad  \GT\quad \myeq \quad  \text{Cantor set}.\]
 \end{thm}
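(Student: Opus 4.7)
The plan is to reduce the theorem to a direct application of the topological characterization of the Cantor space: any nonempty, compact, metrizable, totally disconnected, perfect topological space is homeomorphic to $\{0,1\}^{\mathbb{N}}$. The strategy therefore splits into verifying these five properties separately for $\Gal$ and for $\GT$, and then concluding that both are homeomorphic to the Cantor set, hence to each other in $\mathbf{ProFin}$.

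For $\Gal$, each of the required properties has already been individually established in Section \ref{S:2}--\ref{S:3}. As a profinite group realized as the inverse limit $\varprojlim_{L/\Q}\operatorname{Gal}(L/\Q)$ of finite discrete groups, it is compact, Hausdorff, and totally disconnected. Second-countability follows from the fact that finite Galois extensions of $\Q$ are parameterized by a countable set of polynomials, so the clopen normal subgroups $N_L=\operatorname{Gal}(\overline{\Q}/L)$ form a countable neighborhood basis at the identity; Urysohn's metrization theorem then yields metrizability, and compactness upgrades this to complete metrizability with separability, so $\Gal$ is Polish. Perfectness is the content of the proposition in Section \ref{S:3}: any would-be isolated singleton $\{\sigma\}$ would have to contain a nontrivial coset $\sigma N_L$, which is impossible. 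Hence $\Gal \myeq \{0,1\}^{\mathbb{N}}$.

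For $\GT$, the same list of properties has been verified in Section \ref{S:5}. Compactness, Hausdorffness, and total disconnectedness follow from its realization as a closed subgroup of the profinite group $\operatorname{Aut}(\widehat{F}_2)$ via Lemmas \ref{L:F2P} and \ref{L:AutF2}. Proposition \ref{P:GTPol} establishes that $\GT$ is Polish, hence separable and completely metrizable, and Proposition \ref{prop:gt_cantor_meta} shows that it is perfect, since no clopen coset in the profinite topology can be a singleton. Thus $\GT$ meets all five hypotheses, and again $\GT \myeq \{0,1\}^{\mathbb{N}}$.

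Composing the two homeomorphisms yields $\Gal \myeq \GT$ in $\mathbf{ProFin}$, completing the argument. The proof is thus assembled from already-proven propositions, so no serious technical obstacle remains; the only delicate point worth emphasizing in the write-up is that the classification in \cite{K} produces the homeomorphism abstractly, so the identification $\Gal \myeq \GT$ is \emph{not} canonical, a caveat that becomes substantive only when one later fixes a convention of clopens in order to run the Cubic Matrioshka Algorithm.
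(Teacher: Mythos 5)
Your proposal is correct and follows essentially the same route as the paper: both appeal to Brouwer's topological characterization of the Cantor set (nonempty, compact, metrizable/Polish, perfect, zero-dimensional $\Rightarrow$ homeomorphic to $\{0,1\}^{\mathbb{N}}$), after verifying these hypotheses for $\Gal$ and $\GT$ using the propositions of Sections~\ref{S:3} and \ref{S:5}. The only cosmetic difference is that the paper first applies Brouwer to conclude $\Gal \myeq \GT$ directly and then invokes the classification of \cite{K} to identify the common homeomorphism type as the Cantor set, whereas you identify each group with $\{0,1\}^{\mathbb{N}}$ separately and compose; the mathematical content is identical, and your remark on non-canonicity matches the paper's own caveat.
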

\begin{proof}
We have established that $\GT$ and $\Gal$ (Appendix is \ref{S:2}--\ref{S:3}) are compact Polish perfect spaces.
 On the other side, these groups are profinite, implying that $\GT$ and $\Gal$ are totally disconnected and locally compact.
 Moreover, they  form zero-dimensional spaces. Therefore, all these conditions are unified to apply the Theorem of Brouwer.
  
  \medskip  

The theorem of Brouwer states that any two non-empty compact Polish spaces which are
perfect and zero dimensional are homeomorphic to each other.

  \medskip  
  
Now, applying this to $\GT$ and $\Gal$ enables us to conclude that the compact Polish spaces $\GT$ and $\Gal$ are homeomorphic. 
Finally, using the classification in \cite{W}  enables us to conclude that \[\Gal\quad \myeq \quad  \GT\quad \myeq \quad  \text{Cantor set}.\] 
\end{proof}
 
  \medskip  
 
 As a first consequence of our theorem \ref{T:Cantor} the Grothendieck conjecture, holds in the context of profinite spaces. 
By virtue of Theorem~\ref{T:Cantor}, one may therefore encode automorphisms (e.g., elements of the absolute Galois group or of the Grothendieck--Teichmüller group) via infinite binary sequences, which makes it a potentially interesting approach from the side of computations.

\section{The  Galois Grothendieck Path Integral}\label{S:8}
In order to recover the arithmetic data which remains invisible in the category {\bf ProFin} we generalise the notion of Feynman Path Integral to $\GT$ and $\Gal$.

 \subsection{Motivation}
 Our approach is partially inspired by certain developments introduced for \( \mathbb{Z}_p \), the \( p \)-adic integers. Those objects exhibit also a Cantor structure, \cite{R}. The  \( p \)-adic integers provide a rigorous mathematical framework for modeling certain theories such as quantum gravity, non-Archimedean geometry, and the adelic unification of number theory and physics \cite{BF,Man,V1,V2}. Within these frameworks, path integrals have been adapted to the $p$-adic setting. We generalise the notion of path integral for $\GT$ and $\Gal$, providing new arithmetic invariants.
 
 \subsection{Surveying Results}
We introduce a formalism of path integrals over $\Gal$ and $\widehat{GT}$, which  produces invariants detecting arithmetic phenomena.
\subsubsection{The Binary Integral} We define arithmetic path integrals  as follows:
\begin{dfn}
 Let $[c]$ denote an étale cohomology class and $[\Psi]$ a motivic period. Then, the arithmetic path integrals for $\Gal$ (respectively $\GT$) are defined as:
\[
I([c]) = \int_{\Gal} \exp(2\pi i \langle \sigma, [c] \rangle) \, d\mu(\sigma), \quad
\text{resp.}\quad I([\Psi]) = \int_{\widehat{GT}} \exp(2\pi i \langle \theta, [\Psi] \rangle) \, d\mu(\theta),
\]
where $\mu$ is the normalized Haar  measure, and $\langle \cdot, \cdot \rangle$ denotes a natural Galois or motivic pairing (see  Sec. \ref{S:9.4}, Theorem \ref{T:9.2} and Sec. \ref{S:9.6}, Theorem \ref{T:9.3}). 
The outcome of these integrals is binary:
\begin{itemize}[itemsep=0.2em, topsep=0.3em]
\item A value of $1$ indicates that the class or period is invariant under the group action (i.e., descends to $\mathbb{Q}$ or lies in $\mathbb{Q}$).
\item A value of $0$ reflects a nontrivial obstruction—either arithmetic (class does not descend) or transcendental (period not rational).
\end{itemize}
\end{dfn}
\smallskip

\noindent These arithmetic path integrals encode key conjectures:
\begin{itemize}[itemsep=0.3em]
\item For $\widehat{GT}$: rationality vs. transcendence of periods;
\item For $\mathrm{Gal}(\overline{\mathbb{Q}}/\mathbb{Q})$: descent of torsors and the Tate--Shafarevich group. 
\end{itemize}

\smallskip

\subsubsection{Generalization}  A generalization of the binary integral can be obtained as follows. 
Let $\rho: \Gal \to \mathrm{GL}_n(\mathbb{Q}_\ell)$ be a continuous $\ell$-adic Galois representation. We define a generalization of the arithmetic path integral via:
\[
I(\rho) = \int_{\Gal} \mathrm{Tr}(\rho(\sigma)) \, d\mu(\sigma) = \dim_{\mathbb{Q}_\ell}(\rho^{\Gal}),
\]
where $\rho^{\Gal}$ is the subspace of $\Gal$-invariant vectors.

\smallskip

This quantity computes the dimension of invariants and gives rise to meaningful arithmetic invariants.
 \begin{ex} Let $E$ be an elliptic curve. Then, the  continuous $\ell$-adic Galois representation $\rho$ arises from the $\ell$-adic Tate module of $E$.  The path integral tests central modern problems. It recovers for instance the Mordell--Weil rank of $E(\mathbb{Q})$. By the Mordell–Weil theorem, the group of rational points \( E(\mathbb{Q}) \) is finitely generated:

\[
E(\mathbb{Q}) \cong \mathbb{Z}^r \times E(\mathbb{Q})_{\text{tors}},
\]

where:

\begin{itemize}
    \item \( E(\mathbb{Q})_{\text{tors}} \) is the torsion subgroup, consisting of rational points of finite order (a finite group),
    \item \( r \in \mathbb{Z}_{\geq 0} \) is the Mordell–Weil rank of the curve.
\end{itemize}

The integer \( r \) is the number of independent generators of infinite order in \( E(\mathbb{Q}) \). It reflects the ``size" of the set of rational points:

\begin{itemize}
    \item If \( r = 0 \), then all rational points are torsion points — \( E(\mathbb{Q}) \) is a finite group.
    \item If \( r > 0 \), then there are \( r \) linearly independent rational points of infinite order. The free part of the group \( E(\mathbb{Q}) \) is isomorphic to \( \mathbb{Z}^r \).
\end{itemize}
 
 \end{ex}

\subsection{The Generic Path Integral}
\subsubsection{Preliminaries} For a locally compact group  $G$, the \emph{Haar measure} $\mu$ is the unique (up to scaling) translation-invariant Borel measure. When $G$ is compact, it is normalized so that $\mu(G) = 1$. It satisfies the following properties:

\begin{itemize}
    \item \textbf{Left-invariance:} For all $g \in G$ and measurable sets $A \subseteq G$,
    \[
    \mu(gA) = \mu(A).
    \]
    
    \item \textbf{Regularity:} The measure is both inner and outer regular on Borel sets.
    
    \item \textbf{Cylindrical Sets:} In a profinite group $G = \varprojlim G_i$, the \emph{cylindrical sets} are clopen subsets of the form $C = \pi_i^{-1}(A_i)$, where $\pi_i: G \to G_i$ is the canonical projection and $A_i \subseteq G_i$ is measurable. These sets form a basis for the topology on $G$.
\end{itemize}
\begin{lem}\label{L:Measure}
The normalized Haar measure $\mu$ on a profinite group $G$ is \emph{cylindrical}: for any cylindrical set $C = \pi_i^{-1}(g_i)$ with $g_i \in G_i$, we have
\[
\mu(C) = \frac{1}{|G_i|}.
\]
Moreover, $\mu$ is translation-invariant and uniquely determined by this formula.
\end{lem}
\begin{proof}
\textbf{Finite-Level Consistency.}
For each finite quotient $G_i$ of a profinite group $G = \varprojlim G_i$, the pushforward of the Haar measure $\mu$ under the projection $\pi_i: G \to G_i$ defines a measure $\mu_i$ on $G_i$ given by the normalized counting measure:
\[
\mu_i(B) = \frac{|B|}{|G_i|}, \quad \forall B \subseteq G_i.
\]
The Haar measure on the finite (discrete) group $G_i$ is the normalized counting measure. Since $\pi_i$ is a continuous surjective homomorphism, the pushforward $\mu_i = (\pi_i)_\ast \mu$ inherits both left-invariance and normalization from $\mu$.

\medskip

\textbf{Cylindrical Measure Formula.}
For a cylinder set of the form $C = \pi_i^{-1}(g_i)$ (a singleton fiber),
\[
\mu(C) = \mu_i(\{g_i\}) = \frac{1}{|G_i|}.
\]
This follows from the compatibility of $\mu$ with the inverse system: $\pi_i$ maps $C$ bijectively onto the singleton $\{g_i\}$, and $\mu$ agrees with $\mu_i$ via the pushforward.

\medskip

\textbf{Translation Invariance.}
Let $g \in G$ and consider a cylinder $C = \pi_i^{-1}(g_i)$. The left-translation map $L_g: G \to G$ satisfies:
\[
gC = L_g(\pi_i^{-1}(g_i)) = \pi_i^{-1}(\tilde{g} \cdot g_i),
\]
where $\tilde{g} = \pi_i(g) \in G_i$, since $\pi_i$ is a group homomorphism and hence commutes with translations. Then,
\[
\mu(gC) = \mu_i(\tilde{g} \cdot g_i) = \mu_i(\{g_i\}) = \frac{1}{|G_i|} = \mu(C),
\]
using the translation-invariance of $\mu_i$ on $G_i$.

\medskip

\textbf{Extension to the Borel Algebra.}
The collection of cylindrical sets forms a basis for the topology on $G$, and hence generates the Borel $\sigma$-algebra. By uniqueness in Haar's theorem, the measure $\mu$ is the unique left-invariant Borel probability measure extending the cylindrical assignment.
\end{proof}
\subsubsection{Generic Path Integral for $\Gal$ and $\GT$}
\begin{thm}\label{T:GPI}
Let $G$ be either $\Gal$ or $\GT$. Then, there exists a generic path integral formulation for $G$ providing a novel arithmetic framework inspired by the classical Feynman path integral. 
This invariant takes the form
\[ \mathcal{I}_G=\int_{G}\operatorname{exp}\{\imath S[\gamma]/\hbar\} d\mu_G=\lim_{n\to \infty}\sum_{\Gamma_n}\operatorname{exp}\{\imath S[\gamma_n]/\hbar\}\mu_G([\gamma_n]),\] where: 
\begin{itemize}
\item  $\mu_G$ is a normalized (cylindrical) Haar measure, induced by the structure of $G$,
\item  $S:G \to \mathbb{R}$ is an arithmetic action, defined as the functional of a quadratic form, 
\item $\hbar$  is a deformation parameter, 
\item $\gamma$ is an element of $G$ and forms a path of arithmetic nature,
\item  $\Gamma_n$ encodes the family of all automorphisms in $G$ corresponding to length $n$ sequences of field extensions of $\Q$ in $\overline{\Q}$ and $\gamma_n\in \Gamma_n$.
 \end{itemize}
 \end{thm}
 \begin{proof}
Our construction relies on the profinite space structures of $\Gal$ and $\GT$. Each automorphism in $\Gal$ or $\GT$ (according to the construction of the previous sections these form a {\it path} in the underlying Cantor set $\cC$) contributes a factor $\operatorname{exp}\{\imath S/\hbar\}$, where $S$ is a functional on a quadratic form.

\medskip 

Given a profinite space
 $$X=\varprojlim X_i,$$ where each $X_i$ is finite, with continuous surjective maps: $X_j\to X_i$ (for $j\geq i$), we equip $X$ with a cylindrical measure. The cylindrical measure is then defined by finite-dimensional projections, ensuring compatibility:  for all $j\geq i$, the measures $\mu_i$ are compatible with the projection maps $X_j\to X_i$ and $\mu_i$ is identified with a pushforward of $\mu_j$.

\medskip

A cylindrical measure on an infinite-dimensional space (e.g., a function space) is defined via consistent finite-dimensional projections. It generalizes integration to infinite dimensions using the projective limit framework. Cylindrical measures are finitely additive but not necessarily $\sigma$--additive.
Profinite measures {\it are} cylindrical measures, by construction. We invoke Lem. \ref{L:Measure}, proving that for $G$ there exists a well defined normalized (cylindrical) Haar measure. 

\medskip

We are therefore able to introduce a Path Integral, which provides invariants of $G$. This construction provides a natural summation over all trajectories in $G$. Those trajectories correspond naturally to automorphisms of the absolute Galois group over $\Q$ and to field extensions, weighted by the  ``phase factor'' $\operatorname{exp}\{\imath S/\hbar\}$, which serves as an arithmetic analogue of the quantum mechanical weighting of paths and encapsulating dynamical contributions from automorphisms in $\Gal$ (resp. $\GT$). 

\medskip 
 
Mimicking the path integral construction, via the outlined steps above, the resulting Path Integral provides a geometric and measure-theoretic framework for understanding the automorphisms of  $\Gal$  and $\GT$, revealing their hierarchical and recursive structure and from which invariants can be extracted. 
\end{proof}

\subsection{Arithmetic Invariant for $G=\Gal$.}\label{S:9.4} 
We construct the Galois Path Integral below.
\begin{enumerate} 
\item  {\bf Arithmetic Paths. } Recall that paths are arithmetic paths that is  elements in $\Gal$.
\item {\bf Arithmetic Functional}. The  functional $S$ (action) assigns a ``weight'' to each Galois automorphism $\sigma$. This construction mirrors classical physical actions but within an arithmetic and cohomological setting. We discuss a possibly {\it good} choice of an arithmetic action functional below. 
\item {\bf About the  measure}. Lemma \ref{L:Measure} provides a rigorous explanation on the choice of the measure.  Using this measure, the path integral is equal to 1 if the character is trivial and equal to 0 otherwise, providing a Galois-theoretic invariant for the properties of interest (rationality or descent).
\end{enumerate}

\begin{lem}
The Arithmetic Action Functional can be constructed via a pairing $\langle \sigma, [c] \rangle$ for $[c]\in  H^1_{\mathrm{\acute{e}t}}(M, \Q/\bZ) $
which is well defined via the Kummer sequence or evaluation of cocycles in Galois cohomology.
\end{lem}
\begin{proof}
The \textit{Kummer sequence} is a fundamental exact sequence in étale cohomology that connects multiplicative groups to torsion phenomena arising from roots of unity. For a scheme $X=Spec(\Q)$ and an integer $n > 0$, the sequence is:
\[
1 \to \boldsymbol{\mu}_n  \to \mathbb{G}_m \xrightarrow{(\cdot)^n} \mathbb{G}_m \to 1,
\]
where:
\begin{itemize}[itemsep=2pt]
    \item $\boldsymbol{\mu}_n$ is the étale sheaf of $n^{\text{th}}$ roots of unity.
    \item $\mathbb{G}_m$ denotes the sheaf of invertible elements (units).
    \item The map $\mathbb{G}_m \xrightarrow{(\cdot)^n} \mathbb{G}_m$ sends $x \mapsto x^n$.
\end{itemize}

This short exact sequence induces a long exact sequence in étale cohomology:
\[\boldsymbol{\mu}_n
\cdots \to H^0_{\text{ét}}(X, \mathbb{G}_m) \xrightarrow{(\cdot)^n} H^0_{\text{ét}}(X, \mathbb{G}_m) 
\to H^1_{\text{ét}}(X, \boldsymbol{\mu}_n) \to H^1_{\text{ét}}(X, \mathbb{G}_m) \to \cdots
\]

For $X=Spec(\Q)$, we have  $H^0_{\text{ét}}(X, \mathbb{G}_m) =\Q^{\times}$ and $H^1_{\text{ét}}(X, \mathbb{G}_m) =0$. So the sequence reduces to: 

\[ \Q^{\times}  \xrightarrow{(\cdot)^n} \Q^{\times} \to H^0_{\text{ét}}(X, \boldsymbol{\mu}_n) \to 0,
\]
yielding a Kummer isomorphism:

\[H^1_{\text{ét}}(X, \boldsymbol{\mu}_n)\cong \Q^{\times}  /(\Q^{\times})^n.\]

The sheaf $\Q/\mathbb{Z}(1)$ is defined as the direct limit:

\[ \Q/\mathbb{Z}(1)= \underset{\underset{n}{\longrightarrow}}{\lim}  \, \boldsymbol{\mu}_n.\]

Étale cohomology commutes with direct limits, so  
\[H^1_{\text{ét}}(X, \Q/\mathbb{Z}(1))\cong \underset{\underset{n}{\longrightarrow}}{\lim}  H^1_{\text{ét}}(X, \boldsymbol{\mu}_n)\cong \underset{\underset{n}{\longrightarrow}}{\lim}\,   \Q^{\times}  /(\Q^{\times})^n\]

The direct limit $ \underset{\underset{n}{\longrightarrow}}{\lim}\,   \Q^{\times}  /(\Q^{\times})^n$ is isomorphic to $\mathbb{Q}^\times \otimes \mathbb{Q}/\mathbb{Z}$ via: 

\[a\otimes \frac{k}{n}\mapsto [a^k]\in  \Q^{\times}  /(\Q^{\times})^n, \]
where $[a^k]$ denotes the class of $a^k$ modulo $n$th powers. 

This induces the isomorphism:

\[
H^1_{\text{ét}}(\operatorname{Spec}(\mathbb{Q}),\,  \mathbb{Q}/\mathbb{Z}(1)) \cong \, \mathbb{Q}^\times \otimes \mathbb{Q}/\mathbb{Z}
\]

Thus, a class \([c]\) corresponds to an element 
\[
\alpha = a \otimes \frac{k}{m} \in \mathbb{Q}^\times \otimes \mathbb{Q}/\mathbb{Z}.
\]

To detect Galois action, we lift  $\alpha$ to \(\overline{\mathbb{Q}}^\times\) (since  \(\overline{\mathbb{Q}}^\times\) reveals the Galois action but $\mathbb{Q}^{\times}$ has no Galois action).
 Specifically $\alpha$ involves an $m$-th root of $a$, lying in  \(\overline{\mathbb{Q}}^\times\). Choose \(\beta \in \overline{\mathbb{Q}}^\times\) such that \(\beta^m = a^k\). 
%For example, if \(\alpha = 2 \otimes \frac{1}{2}\), one can take \(\beta = \sqrt{2}\).

\textbf{Pairing Definition:} For \(\sigma \in \Gal = \operatorname{Gal}(\overline{\mathbb{Q}}/\mathbb{Q})\), define the pairing:

\[
\langle \sigma, [c] \rangle = \frac{1}{2\pi i} \log\left( \frac{\sigma(\beta)}{\beta} \right) \mod \mathbb{Z}
\]

%\textit{Why well-defined?} 
By field automorphism $\sigma(xy)=\sigma(x)\sigma(y)$ we have that $\sigma(x^n)=(\sigma(x))^n$. Therefore:

 \((\frac{\sigma(\beta)}{\beta})^m=\frac{\sigma(\beta^m)}{\beta^m}\). Now, Since $\sigma(a^k)=(\sigma(a))^k=a^k$, where $a\in \mathbb{Q}^{\times}$, we obtain:  
 \(\frac{\sigma(a^k)}{a^k}=\frac{a^k}{a^k}=1\). This allows us to conclude tha \(\sigma(\beta)/\beta\) is a root of unity, say \(\zeta\). So:

\[
\frac{1}{2\pi i} \log(\zeta) = \theta \in \mathbb{Q}/\mathbb{Z}
\]

and changing \(\beta\) to another root changes the logarithm by an integer multiple of \(2\pi i\), so the pairing remains well-defined modulo \(\mathbb{Z}\).

%\textbf{Example:} If \(\sigma(\sqrt{2}) = -\sqrt{2}\), then

%\[
%\frac{\sigma(\beta)}{\beta} = -1 = e^{\pi i} \Rightarrow \langle \sigma, [c] \rangle = \frac{1}{2\pi i} \cdot \pi i = \frac{1}{2}
%%\]

%\subsection*{2. 
We discuss the more general case: the definition via cocycle evaluation. For a general \(X\) (e.g., an elliptic curve), the cohomology class \([c] \in H^1_{\text{ét}}(X, \mathbb{Q}/\mathbb{Z}(1))\) is represented by a cocycle:

\[
c: \Gal \to \mathbb{Q}/\mathbb{Z}(1)
\]

satisfying:

\[
c(\sigma\tau) = c(\sigma) + \sigma \cdot c(\tau)
\]

The pairing is defined as:

\[
\langle \sigma, [c] \rangle = c(\sigma) \in \mathbb{Q}/\mathbb{Z}
\]

This is independent of the cocycle representative.

\end{proof}

\smallskip 

We formalise this under the shape of the following theorem.

\begin{thm}\label{T:9.2}
Let $G = \mathrm{Gal}(\overline{\mathbb{Q}}/\mathbb{Q})$ be equipped with the normalized Haar measure $\mu$. Let $M$ be a motive over $\mathbb{Q}$, and let $[c] \in H_{\mathrm{\acute{e}t}}^1(M, \mathbb{Q}/\mathbb{Z})$ be an étale cohomology class. Consider the pairing:
\[
\langle \sigma, [c] \rangle \in \mathbb{Q}/\mathbb{Z}, \quad \sigma \in \Gal,
\]
defined via the Kummer sequence (or evaluation of cocycles). This pairing is bilinear and continuous. Define the integral:
\[
I([c]) := \int_{\Gal} \exp\left(2\pi i \langle \sigma, [c] \rangle\right) \, d\mu(\sigma).
\]

Then, the map 
\[
\sigma \mapsto \exp\left(2\pi i \langle \sigma, [c] \rangle\right)
\]
is a continuous character of $\Gal$, and:
\[
I([c]) = 
\begin{cases}
1 & \text{if } \langle \sigma, [c] \rangle = 0 \text{ in } \mathbb{Q}/\mathbb{Z} \text{ for all } \sigma \in \Gal, \\
0 & \text{otherwise}.
\end{cases}
\]

The pairing provides a \textbf{Galois Invariance Criterion:}  
\[
\langle \sigma, [c] \rangle = 0 \text{ for all } \sigma \in \Gal
\quad \Longleftrightarrow \quad
[c] \text{ is } \Gal \text{-invariant, i.e., } [c] \in H_{\mathrm{\acute{e}t}}^1(M, \mathbb{Q}/\mathbb{Z})^{\Gal}.
\]
Thus $I([c])$ is a Galois-theoretic detector for whether $[c]$ descends to $\Q$:
\[
I([c]) = 
\begin{cases}
1 & \text{if } [c] \text{ is } \Gal \text{-invariant}, \\
0 & \text{otherwise}.
\end{cases}
\]
\end{thm}

\begin{proof}
The pairing $\langle \sigma, [c] \rangle$ is a homomorphism in $\sigma$ by bilinearity, so the map
\[
\sigma \mapsto \exp\left(2\pi i \langle \sigma, [c] \rangle\right)
\]
defines a continuous character of the compact group $\Gal$. The normalized Haar measure $\mu$ then implies that
\[
I([c]) \in \{0, 1\},
\]
with $I([c]) = 1$ if and only if the character is trivial.

\, 

Now, the character is trivial if and only if
$
\langle \sigma, [c] \rangle = 0 \quad \text{for all } \sigma \in \Gal,
$
which is precisely the condition that $[c]$ is $\Gal$-invariant.

\, 

Precisely, \begin{enumerate}
\item if $\chi(\sigma) = 1$ for all $\sigma \in \Gal$, then:
\[
\int_{\Gal} 1 \, d\mu(\sigma) = \mu(\Gal) = 1.
\]

\item If $\chi$ is non-trivial, then there exists $\tau \in \Gal$ such that $\chi(\tau) \ne 1$. By translation-invariance of $\mu$:
\[
\int_{\Gal} \chi(\sigma) \, d\mu(\sigma)
= \int_{\Gal} \chi(\tau\sigma) \, d\mu(\sigma)
= \chi(\tau) \int_{\Gal} \chi(\sigma) \, d\mu(\sigma),
\]
since $d\mu(\tau\sigma) = d\mu(\sigma)$. Let
\[
I = \int_{\Gal} \chi(\sigma) \, d\mu(\sigma).
\]
Then:
\[
I = \chi(\tau) I \quad \Rightarrow \quad I(1 - \chi(\tau)) = 0.
\]
As $\chi(\tau) \ne 1$, we conclude $I = 0$.
\end{enumerate}
\, 

We now discuss the triviality of $\chi_{[c]}$ and the implied $\Gal$-Invariance.

The character $\chi_{[c]}$ is trivial if and only if $[c]$ is $\Gal$-invariant:
\[
\chi_{[c]} \text{ trivial } \iff \langle \sigma, [c] \rangle = 0 \quad \forall \sigma \in \Gal \iff [c] \text{ is $\Gal$-invariant}.
\]

\[
\chi_{[c]}(\sigma) = 1 \quad \forall \sigma \in \Gal 
\iff \exp(2\pi i \langle \sigma, [c] \rangle) = 1 
\iff \langle \sigma, [c] \rangle \in \mathbb{Z}
\quad \text{for all } \sigma \in \Gal.
\]
Since $\langle \sigma, [c] \rangle \in \mathbb{Q}/\mathbb{Z}$, and the values lie in $[0,1)$, we conclude:
\[
\langle \sigma, [c] \rangle \in \mathbb{Z} \iff \langle \sigma, [c] \rangle = 0 \text{ in } \mathbb{Q}/\mathbb{Z}.
\]
This is precisely the definition of $[c]$ being $\Gal$-invariant.

Finally, consider the character:
\[
\chi_{[c]}(\sigma) = \exp(2\pi i \langle \sigma, [c] \rangle).
\]
The exponential map $x \mapsto \exp(2\pi i x)$ satisfies:
\[
\exp(2\pi i x) = 1 \iff x \in \mathbb{Z}.
\]
This follows from Euler’s formula:
\[
\exp(2\pi i x) = \cos(2\pi x) + i \sin(2\pi x),
\]
which equals $1$ iff $\cos(2\pi x) = 1$ and $\sin(2\pi x) = 0$, i.e., $x \in \mathbb{Z}$.

By definition, $\langle \sigma, [c] \rangle \in \mathbb{Q}/\mathbb{Z}$, so:
\[
\exp(2\pi i \langle \sigma, [c] \rangle) = 1 \iff \langle \sigma, [c] \rangle \in \mathbb{Z}.
\]

Hence:
\[
\langle \sigma, [c] \rangle \in \mathbb{Z} \iff \langle \sigma, [c] \rangle = 0 \text{ in } \mathbb{Q}/\mathbb{Z}.
\]

The pairing $\langle \sigma, [c] \rangle$ is defined via:
\[
\langle \sigma, [c] \rangle = \mathrm{ev}_{[c]}(\sigma) \in \mathbb{Q}/\mathbb{Z},
\]
and arises from the Galois action on $[c]$.

For $[c] \in H^1_{\text{ét}}(M, \mathbb{Q}/\mathbb{Z})$, the Galois group $\Gal$ acts by:
\[
\sigma \cdot [c] = [c] \circ \sigma^{-1}.
\]
Then $[c]$ is $\Gal$-invariant iff $\sigma \cdot [c] = [c]$ for all $\sigma \in \Gal$, which is equivalent to:
\[
\mathrm{ev}_{[c]}(\sigma) = \langle \sigma, [c] \rangle = 0 \quad \forall \sigma \in \Gal.
\]

Combining all steps:
\[
\chi_{[c]}(\sigma) = 1 \quad \forall \sigma \in \Gal
\iff \exp(2\pi i \langle \sigma, [c] \rangle) = 1 \quad \forall \sigma \in \Gal
\iff \langle \sigma, [c] \rangle \in \mathbb{Z} \quad \forall \sigma \in \Gal
\]
\[
\iff \langle \sigma, [c] \rangle = 0 \text{ in } \mathbb{Q}/\mathbb{Z} \quad \forall \sigma \in \Gal
\iff [c] \text{ is $\Gal$-invariant}.
\]

%%%%%%
 \end{proof}
\noindent\textbf{Arithmetic Interpretation.}  
The $\Gal$-invariance of $[c]$ is equivalent to $[c]$ descending to a class over $\mathbb{Q}$. 
In general, $I([c]) = 1$ detects classes that are unobstructed by Galois action. %relating to the Tate conjecture and descent obstructions.

\, 
\begin{ex}
Let $E/\mathbb{Q}$ be an elliptic curve  and let
\[
[c] \in H_{\text{\'et}}^1(E, \mathbb{Q}/\mathbb{Z})
\]
be a cohomology class.  Define a pairing
\[
\langle \sigma, [c] \rangle \in \mathbb{Q}/\mathbb{Z}, \quad \sigma \in G = \mathrm{Gal}(\overline{\mathbb{Q}}/\mathbb{Q}),
\]
using the evaluation of the Galois action on $[c]$ via étale cohomology.  Then,
\[
I([c]) = 1\] if and only if  $[c]$  corresponds via duality to an element of the Tate--Shafarevich group  $Sha(E$).

\, 

For the elliptic curve $E/\mathbb{Q}$, the $\Gal$-invariance of $[c]$ is equivalent to $[c]$ descending to a class defined over $\mathbb{Q}$. Via duality in étale cohomology, this condition corresponds to $[c]$ representing an element of the Tate–Shafarevich group $Sha(E)$:

The Tate–Shafarevich group $Sha(E)$ classifies $E$-torsors over $\mathbb{Q}$ that are locally trivial everywhere. By the Kummer sequence and duality in étale cohomology, there is a non-degenerate pairing:
\[
H^1_{\text{\'et}}(E, \mathbb{Q}/\mathbb{Z})^{\Gal} \times Sha(E) \to \mathbb{Q}/\mathbb{Z},
\]
where $H^1_{\text{\'et}}(E, \mathbb{Q}/\mathbb{Z})^{\Gal}$ denotes the Galois-invariant classes. Under this duality, a class $[c]$ is $\Gal$-invariant if and only if it corresponds to an element of $Sha(E)$.

\begin{itemize}
  \item $I([c]) = 1$ detects that $[c]$ is unobstructed by Galois action, meaning the associated torsor (via duality) is defined over $\mathbb{Q}$ and locally trivial—i.e., it lies in $Sha(E)$.
  \item $I([c]) = 0$ indicates $[c]$ is obstructed (not defined over $\mathbb{Q}$), so it does not arise from $Sha(E)$.
\end{itemize}
\, 

If $E$ has non-trivial $Sha(E)$ (e.g., failing the Hasse principle), there exist classes $[c]$ such that $I([c]) = 1$. Therefore, we can use the integral $I([c])$ as a Galois-theoretic detector for elements of $Sha(E)$. %For example, if $[c]$ corresponds to a torsor in $Sha(E)$ with no $\mathbb{Q}$-point, then:

\end{ex}

%subsection{Table of Analogies}
\begin{prop}
There exists  a structural analogy (cf. Table~\ref{T:Ana}) between the Feynman Path Integral framework in physics and our arithmetic analogue:
\begin{center}
\begin{table}[ht]\label{T:Ana}
\renewcommand{\arraystretch}{1.1}
\begin{tabularx}{\textwidth}{|>{\bfseries\hsize=0.07\hsize}X<{\bfseries} 
                             |>{\hsize=1.65\hsize}X<{\normalfont} 
                             |>{\hsize=1.28\hsize}X<{\normalfont} 
                             |>{\bfseries\hsize=1.25\hsize}X<{\normalfont}|}

\hline
1& \textbf{Feynman Path Integral} & \textbf{Arithmetic Analogue} \\
\hline
2&Space of paths: Continuous trajectories $\gamma: [t_i,t_f] \to \mathbb{R}^n$ & Space of ``paths": Galois automorphisms $\sigma \in \text{Gal}(\overline{\mathbb{Q}}/\mathbb{Q})$ \\
\hline
3&Fundamental object: Action functional $S[\gamma] = \int_{t_i}^{t_f} L(\gamma,\dot{\gamma},t)  dt$ & Fundamental object: Cohomological pairing $\langle \sigma, [c] \rangle$ for $[c] \in H_{\text{\'{e}t}}^1(M)$ \\
\hline
4&Mathematical role: $S[\gamma]$ assigns a real number to each path $\gamma$ & Mathematical role: $\langle \sigma, [c] \rangle$ assigns a value in $\mathbb{Q}/\mathbb{Z}$ to each $\sigma$ \\
\hline
5&Phase assignment: $e^{iS[\gamma]/\hbar}$ (quantum phase) & Phase assignment: $e^{2\pi i \langle \sigma, [c] \rangle}$ (Artin-type phase)\footnote{Note that the notion of $\hbar$ here is already encapsulated in Drinfeld's Associator which is a formal power series.} \\
\hline
6&Nature of phase: Unit complex number ($|e^{iS/\hbar}|=1$) & Nature of phase: Root of unity ($|e^{2\pi i \langle \sigma,[c]\rangle}|=1$) \\
\hline
7&Interference mechanism: Sum over paths: $\sum\limits_{\gamma} e^{iS[\gamma]/\hbar}$ & Interference mechanism: ``Sum" over Galois: $\sum\limits_{\sigma} e^{2\pi i \langle \sigma, [c] \rangle}$ \\
\hline
8&Interference example: $e^{iS[\gamma_1]} + e^{iS[\gamma_2]}$ & Interference example: $e^{2\pi i \langle \sigma_1, [c] \rangle} + e^{2\pi i \langle \sigma_2, [c] \rangle}$ \\
\hline
9&Physical consequence: Quantum superposition and interference patterns & Arithmetic consequence: Formation of $L$-functions and automorphic forms \\
\hline
10&Classical limit: Paths with $\delta S = 0$ dominate as $\hbar \to 0$ & Classical limit: Frobenius elements $\text{Frob}_p$ (unramified primes) dominate \\
\hline
11& Key symmetry: Gauge invariance (e.g., $A_\mu \to A_\mu + \partial_\mu \Lambda$) & Key symmetry: Galois-equivariance (e.g., $\rho(\sigma\tau) = \rho(\sigma)\rho(\tau)$) (composition algebra) \\
\hline
\end{tabularx}

\caption{Analogy Table between Feynman Path Integral and its Arithmetic Version}
\end{table}
\end{center}

\end{prop}
\begin{proof}
This is a direct application of the classical definition of Path Integrals and a discussion on the properties of our construction of the Galois Path Integral. 
\end{proof}

\subsection{Arithmetic Invariant for $\GT$}\label{S:9.6}
The analogy with quantum path integrals becomes richer when we replace the absolute Galois group $\Gal$ with the profinite Grothendieck--Teichm\"uller group $\widehat{GT}$. 
 
\begin{itemize}
  \item {\bf Group} $G$: The group $G$ is now $\GT$ and elements $\sigma \in \Gal$ are replaced by elements $\theta \in \widehat{GT}$.
  \item {\bf General pairing:} For a motivic period $[\Psi]$ from the fundamental group $\pi_1(\mathbb{P}^1 \setminus \{0,1,\infty\})$, the pairing $\langle \theta, [\Psi]\rangle=\theta(\Psi^{mot})\in \mathbb{Q}$  is defined via the coaction of the motivic Galois group. 
  \item The exponential $\exp(2\pi \imath \langle \theta, [\Psi] \rangle)$  defines a finite-order character $\xi_{[\Psi]}:\GT\to \mathbb{C}^\times$:
  \[\xi_{[\Psi]}(\theta)=\exp(2\pi \imath \cdot \theta (\Psi^{mot})).\]

 \end{itemize}

To summarise, the integral serves as a rationality detector: it is 1 if $[\Psi ]$ is rational and 
0 otherwise. This reflects the deep arithmetic property that $\GT$ -invariance characterizes rationality in the category of mixed Tate motives over 
$\mathbb{Z}$.

As an application, we can use this for Drinfeld associators. A Drinfeld associator $\Phi$ is a formal power series in non-commuting variables $X, Y$ with coefficients in a ring $R$ (e.g., $R = \mathbb{Q}, \mathbb{C}$):

\[
\Phi = \sum_{w} c_w \cdot w, \quad c_w \in R,
\]

where the sum runs over words $w$ in the alphabet $\{X, Y\}$. The coefficients $c_w$ are typically (motivic) multiple zeta values (MZVs). 
The group $\widehat{GT}$ (the Grothendieck-Teichmüller group) acts on associators: for $\theta \in \widehat{GT}$, the deformation $\theta \cdot \Phi$ alters $\Phi$'s coefficients via the motivic Galois group.

Fix a coefficient $c_w$ of $\Phi$, viewed as a motivic period:

\[
[c_w] \in \mathcal{H}_{\text{mot}}.
\]

We define the pairing:

\[
\langle \theta, [c_w] \rangle := \theta(c_w^{\text{mot}}) \in \mathbb{Q}, \quad \theta \in \widehat{GT},
\]

where $c_w^{\text{mot}}$ denotes the motivic lift of $c_w$.

This induces a character:

\[
\chi_{[c_w]}(\theta) := \exp\left(2\pi i \langle \theta, [c_w] \rangle \right) = \exp\left(2\pi i \cdot \theta(c_w^{\text{mot}})\right),
\]

which is a finite-order character of the compact group $\widehat{GT}$.

Using the normalized Haar measure $\mu$ on $\widehat{GT}$, define the integral:

\[
I([c_w]) := \int_{\widehat{GT}} \chi_{[c_w]}(\theta) \, d\mu(\theta) =
\begin{cases}
1 & \text{if } [c_w] \in \mathbb{Q}, \\
0 & \text{otherwise.}
\end{cases}
\]

\paragraph{Interpretation.}
\[
I([c_w]) = 1 \quad \Longleftrightarrow \quad c_w \in \mathbb{Q},
\]
\[
I([c_w]) = 0 \quad \Longleftrightarrow \quad c_w \text{is irrational/transcendental.}
\]

This gives a conceptual Galois-theoretic test for rationality of associator coefficients.
\begin{thm}[Galois Invariance and Rationality of Motivic Periods]\label{T:9.3}
Let $\mathcal{H}_{\text{mot}}$ be the Hopf algebra of motivic periods for mixed Tate motives over $\mathbb{Z}$, and let $\widehat{GT}$ be the Grothendieck--Teichmüller group acting on $\mathcal{H}_{\text{mot}}$ via the motivic Galois group. Fix the normalized Haar measure $\mu$ on $\widehat{GT}$. 

For a motivic period $[\Psi] \in \mathcal{H}_{\text{mot}}$, define the pairing:
\[
\langle \theta, [\Psi] \rangle := \theta(\Psi _{\text{mot}}) \in \mathbb{Q}, \quad \theta \in \widehat{GT},
\]
where $\theta(\Psi _{\text{mot}})$ denotes the evaluation of $\theta$ on a representative $\Psi _{\text{mot}}$ via the coaction of $\mathcal{H}_{\text{mot}}$.

Consider the integral:
\[
I([\Psi ]) := \int_{\widehat{GT}} \exp\left(2\pi i \langle \theta, [\Psi ] \rangle\right) \, d\mu(\theta).
\]

Then the following hold:

\medskip
\noindent\textbf{Character Triviality.}  
The map 
\[
\theta \mapsto \exp\left(2\pi i \langle \theta, [\Psi ] \rangle\right)
\]
is a continuous character of $\widehat{GT}$, and:
\[
I([\Psi ]) = 
\begin{cases}
1 & \text{if } \langle \theta, [\Psi ] \rangle \in \mathbb{Z} \text{ for all } \theta \in \widehat{GT}, \\
0 & \text{otherwise}.
\end{cases}
\]

\medskip
\noindent\textbf{Rationality Criterion.}
\[
\langle \theta, [\Psi ] \rangle \in \mathbb{Z} \text{ for all } \theta \in \widehat{GT}
\quad \Longleftrightarrow \quad
[\Psi ] \text{ is } \widehat{GT}\text{-invariant}
\quad \Longleftrightarrow \quad
[\Psi ] \in \mathbb{Q}.
\]
Thus:
\[
I([\Psi ]) = 
\begin{cases}
1 & \text{if } [\Psi ] \in \mathbb{Q}, \\
0 & \text{otherwise}.
\end{cases}
\]
\end{thm}
\begin{proof}
Let $\mathcal{M}\mathcal{T}(\bZ)$ be the Tannakian category of mixed Tate motives over $\bZ$. This category is neutral Tannakian, so it is equivalent to the category of representations of a pro-algebraic group $G_{\mathcal{M}\mathcal{T}(\bZ)}$ over $\Q$, called the motivic Galois group. The Hopf algebra of motivic periods $\mathcal{H}_{\mathrm{mot}} $ is defined as the coordinate ring of $G_{\mathcal{M}\mathcal{T}(\bZ)}$. A motivic period $[\Psi]$ is represented by an element $\Psi_{\mathrm{mot}}\in \mathcal{H}_{\mathrm{mot}}$, which is a function $\Psi_{\mathrm{mot}}:G_{\mathcal{M}\mathcal{T}(\bZ)}\to \Q$.

By Drinfelfd \cite{Dr},Deligne--Goncharov \cite{DG}, Ihara--Nakamura \cite{IN} and Brown \cite{Br}, $\theta\in \GT$ is identified with a $\Q$-point $i(\theta)$ of $G_{\mathcal{M}\mathcal{T}(\bZ)}$ 
via the map $i:\GT \hookrightarrow G_{\mathcal{M}\mathcal{T}(\bZ)}(\Q)$.

The pairing $\langle \theta, [\Psi] \rangle$ is defined via evaluation
\[
\langle \theta, [\Psi] \rangle:=\Psi_{\mathrm{mot}},
\]
where $\Psi_{\mathrm{mot}}$ is a regular function on $G_{\mathcal{M}\mathcal{T}(\bZ)}$, so it evaluates at $\Q$-points.
\, 

Since $\widehat{GT}$ acts through the motivic Galois group, the map
\[
\theta \mapsto \exp\left(2\pi i \langle \theta, [\Psi] \rangle\right)
\]
is a continuous character. 
\, 
The Tannakian duality for mixed Tate motives over $\mathbb{Z}$ ensures that :
\[
\widehat{GT}\text{-invariance} \iff [\Psi] \in \mathbb{Q}.
\]
Precisely, $[\Psi]$ is $\GT$-invariant if $\langle \theta, [\Psi] \rangle$ is constant for all $\theta\in \GT$. This occurs if and only if $\Psi_{\mathrm{mot}}$ is constant on $G_{\mathcal{M}\mathcal{T}(\bZ)}$, meaning that $[\Psi]$ is a rational number. 

For mixed Tate motives over $\bZ$,  rational motivic periods are integers. This is because the only motives with rational periods are direct sums of the trivial motive $\Q(0)$, whose period is 1. Moreover, non-integer rationals do not arise as periods in  $\mathcal{M}\mathcal{T}(\bZ)$. Thus, if  $[\Psi]$  then $\langle \theta, [\Psi] \rangle=c\in \bZ$ for all $\theta$.

The integral is well defined $I([\Psi])$: the integrand $\exp\left(2\pi i \langle \theta, [\Psi] \rangle\right)$ is continuous and takes values in $S^1\subset \mathbb{C}$.

The compactness of $\widehat{GT}$ and the normalized Haar measure $\mu$ imply that
\[
I([\Psi]) \in \{0, 1\}.
\]
 In particular, $\exp\left(2\pi i \langle \theta, [\Psi] \rangle\right)$  if and only if $\langle \theta, [\Psi] \rangle\in \bZ$ for all $\theta$.
 By rationality and integrability properties: 
 \begin{itemize}
 \item if  $[\Psi]$ is rational  $\langle \theta, [\Psi] \rangle=c\in \bZ$ and so $\exp\left(2\pi i \langle \theta, [\Psi] \rangle\right)=1$ and  
$I([\Psi])=1$. 
\item if  $[\Psi]$ is rational  $\langle \theta, [\Psi] \rangle$ is not constant and so  $\exp\left(2\pi i \langle \theta, [\Psi] \rangle\right)$ is non-trivial and $I([\Psi]) =0$.
\end{itemize}
\end{proof}
\subsection{Generalisation}
Galois representations  $\rho:\Gal\to {\rm GL}_n(\mathbb{Q}_\ell)$ where $\ell$ is a prime and $n\geq 1$ can serve as ``states'' in a path integral over the Galois group $\Gal$ (or the profinite Grothendieck–Teichmüller group $\widehat{GT}$), by defining the state as the character (trace) of $\rho$ (i.e. $\xi_{\rho}(\sigma):=\operatorname{tr}(\rho(\sigma))$). 
 Define the integral:

\[
I(\rho) := \int_{\Gal} \mathrm{Tr}(\rho(\sigma)) \, d\mu(\sigma) = \dim_{\mathbb{Q}_\ell}(\rho^{\Gal}),
\]
where:
\begin{itemize}
  \item $\rho: \Gal\to \mathrm{GL}(V)$ is a continuous $\ell$-adic representation, and
    \item $\rho^{\Gal}$ denotes the $\Gal$-invariant subspace of $V$.
      \item $\mu$ is the normalized Haar measure on $\Gal$.

\end{itemize}

This integral generalizes the previously studied binary invariants for one-dimensional Galois characters. It computes the dimension of the space of invariants $\rho^{\Gal}$, which corresponds to important arithmetic quantities.
 This  defines a cohomological ``path integral'' that captures Galois symmetry and arithmetic structure.

Thus, the integral $I(\rho)$ encodes deep arithmetic invariants in a representation-theoretic and measure-theoretic framework.

\subsubsection{Summary}
To summarise, the Path Integral formalism  provides Galois-theoretic invariants that detect profound arithmetic properties: rationality of periods for $\widehat{GT}$ and rationality of Drinfeld associator coefficients as well as  descent of cohomology classes to $\Q$ for  $\Gal$ for the binary case and in the more general framework  
computes ranks for arithmetic objects. The summary of their properties is summarized the tables \ref{T:1} and \ref{T:2}.

\begin{center}
\begin{table}[ht]
\small
\begin{tabularx}{\textwidth}{|>{\raggedright\arraybackslash}X|>{\raggedright\arraybackslash}X|>{\raggedright\arraybackslash}X|}
\hline
\textbf{Aspect} &
\textbf{Binary Path Integral } &
\textbf{Galois Representation Path Integral (generalisation)} \\
\hline

\textbf{Integrand} &
$\exp(2\pi i \langle \sigma, [\cdot] \rangle)$: \newline
1D character from étale class $[c]$ or motivic period $[\Psi]$ &
$\mathrm{Tr}(\rho(\sigma))$: \newline
Character of Galois representation $\rho: G \to \mathrm{GL}_n(\mathbb{Q}_\ell)$ \\

\hline
\textbf{Output} &
Binary: $0$ or $1$ &
Integer: $\dim_{\mathbb{Q}_\ell}(\rho^G) \geq 0$ \\

\hline
\textbf{Arithmetic Meaning} &
$1$: $[c]$ descends to $\mathbb{Q}$ (e.g., lies in $Sha(E)$) \newline
$0$: Obstruction to descent &
$\dim_{\mathbb{Q}_\ell}(\rho^G)$ gives rank of invariant subspace \newline
e.g., $\mathrm{rank}(E(\mathbb{Q}))$ \\

\hline
\textbf{Dimensionality} &
1-dimensional: characters are roots of unity &
$n$-dimensional: $\rho$ encodes cohomological/motivic data \\

\hline
\textbf{Role of $G$-invariance} &
Detects whether $\langle \sigma, [c] \rangle = 0$ &
Computes $\dim(\rho^G)$, the $G$-invariant vectors \\

\hline
\textbf{Physical Analogy} &
“State”: single-particle wave function &
“State”: multi-particle or qudit system \\

\hline
\end{tabularx}
\caption{Properties of the binary integral vs its generalisation}\label{T:2}
\end{table}

\end{center}

\section{Construction of a Cantorian Path on $\GT$ and $\Gal$}\label{S:6}

\, 
\subsection{Cubic Matrioshka Construction}

From a computational perspective, a natural approach is to approximate an infinite binary sequence (representing an automorphism in \(\Gal\) or \(\GT\)) by finite binary words of length \(n\), where \(n\) is as large as possible. These finite approximations can be viewed as vertices of an \(n\)-dimensional hypercube, where edges connect vertices corresponding to binary words that differ in exactly one coordinate.

\subsubsection{Recursion}
This construction is recursive and gives rise to a bipartite graph. Specifically, the \((n+1)\)-dimensional cube \(Q_{n+1}\) can be obtained by taking two copies of \(Q_n\), denoted \(Q_n^{(1)}\) and \(Q_n^{(2)}\), and appending a 0 (resp. 1) to the binary words in \(Q_n^{(1)}\) (resp. \(Q_n^{(2)}\)).

\subsubsection{Distance}
The Hamming distance between two finite binary words provides a combinatorial measure of their difference. In the context of the Polish groups \(\Gal\) and \(\GT\), this distance reflects the extent to which two automorphisms agree on their first \(p\) binary digits (with \(1 \leq p \leq n\)). This is further detailed in the following sections.

\begin{rem}
When considering the infinite binary sequences (rather than finite approximations), the Hamming distance becomes irrelevant, and the Cantorian metric, as defined in \ref{E:metricC}, should be used.
\end{rem}

\subsection{Cantorian Coordinates for the Automorphisms in $\Gal$}

By virtue of Theorem~\ref{T:Cantor}, there exists a natural homeomorphism 
\[
f\colon \Gal \to \{0,1\}^{\mathbb{N}},
\]
thereby establishing a bijective correspondence between the elements of \(\Gal\) and the set of infinite binary sequences.

In our setting, the homeomorphism \(f\) is constructed so that each automorphism \(\sigma\in\Gal\) is encoded by an infinite sequence of binary digits. In particular, the finite truncations of \(f(\sigma)\) yield {\it approximations} that capture the action of \(\sigma\) on the finite (discrete) Galois groups 
\[
\operatorname{Gal}(K/\Q),
\]
where \(K\) ranges over all finite extensions (or, more generally, number fields with \(K\supset \Q\)). Thus, the infinite binary sequence \(f(\sigma) = (a_1,a_2,a_3,\dots)\) encodes, in a combinatorial fashion, the entire system of congruence conditions and local symmetries exhibited by \(\sigma\).

\, 

 This explicit encoding provides a {\it powerful framework} in which one can analyze the structure of \(\Gal\) via discrete, combinatorial methods. In particular:
 
 \, 
 
\begin{itemize}
    \item The Cantor set structure implies that \(\Gal\) is totally disconnected, perfect, and compact.
    \item The finite approximations given by the projections
    \[
    \pi_n\colon \{0,1\}^{\mathbb{N}} \to \{0,1\}^n,
    \]
 
    capture the action of \(\sigma\) on the discrete Galois groups \(\operatorname{Gal}(K/\Q)\) and can serve as a family of approximation tending to one specific automorphism in $\Gal$.

    \item The encoding thereby provides an effective tool for comparing two automorphisms: on one side using the metrizablility  (induced from the Polish group property) and secondly via the Hamming metric, which exists naturally on finite binary words that is for the approximations.
\end{itemize}

\, 

In summary, the homeomorphism
\[
f\colon \Gal \to \{0,1\}^{\mathbb{N}},
\]
as established in Theorem~\ref{T:Cantor}, allows one to rigorously encode every element of the absolute Galois group as an infinite binary sequence. This encoding not reflects the profinite topology of \(\Gal\) and also encapsulates the complete data of its finite, discrete quotients \(\operatorname{Gal}(K/\Q)\), thereby providing a novel and combinatorially explicit perspective on the structure of \(\Gal\) and its elements.

In other words, this provides a novel coordinate system for each element of $\Gal$ in the Cantor set. 

Such a coordinate system can be also attributed to the profinite Grothendeick--Teichmüller group $\GT$.

\,

 \subsection{Interpretation for $\GT$}
One can apply exactly the same reasoning  for $\GT$. Suppose that there exists an effective encoding
\[
\Phi\colon \GT \to \{0,1\}^{\mathbb{N}},
\]
which is a homeomorphism onto its image, so that each element of \(\GT\) is represented by a unique infinite binary sequence. For each \(n\ge1\), denote by
\[
\pi_n\colon \{0,1\}^{\mathbb{N}} \to \{0,1\}^n
\]
the projection onto the first \(n\) coordinates. In this manner, every \(\tau\in \GT\) is approximated by its finite binary word
\[
w_n(\tau) = \pi_n\bigl(\Phi(\tau)\bigr) \in \{0,1\}^n.
\]

A subset of $\cC$, where a finite number of coordinates is fixed (and the rest is unrestricted) is usually called a {\it cylinder set}. 

\begin{prop}[Hypercube and Cylindrical Structure for Finite Approximations]
Assume that automorphisms in \(\GT\) are encoded by binary sequences as above. Then for each fixed \(n\ge 1\):
\begin{enumerate}
    \item The set \(\{0,1\}^n\) naturally carries the structure of an \(n\)-hypercube, where vertices are identified with \(n\)-bit words.
    \item Two vertices \(w, w'\in \{0,1\}^n\) are adjacent (i.e., connected by an edge) if and only if they differ in exactly one coordinate. In particular, the Hamming distance 
    \[
    d_H(w,w') = \#\{i \in \{1,\dots,n\} : w_i \neq w'_i\}
    \]
    is equal to 1 if and only if \(w\) and \(w'\) are neighboring vertices in the hypercube.
    \item Consequently, the Hamming distance between the finite approximations \(w_n(\sigma)\) and \(w_n(\tau)\) of two automorphisms \(\sigma,\tau \in \GT\) provides a discrete measure of their difference in the combinatorial (and, by continuity, the profinite) topology.
\end{enumerate}
\end{prop}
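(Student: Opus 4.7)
The plan is to treat the three items as a progression from essentially definitional graph-theoretic statements (items (1) and (2)) to the substantive topological observation of (3), which links the combinatorial Hamming distance to the profinite topology on $\GT$ via the Cantorian metric~\eqref{E:metricC} and the homeomorphism $\Phi$ from Theorem~\ref{T:Cantor}.

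For item (1), I would identify each word $w=(w_1,\dots,w_n)\in\{0,1\}^n$ with a vertex of the $n$-hypercube $Q_n$; the recursive description of $Q_{n+1}$ as two copies of $Q_n$ (prefixing the added bit by $0$ and by $1$) with matching cross-edges, as already recalled in the Recursion subsection, delivers the full hypercube structure in a single inductive step. For item (2), the edge set of $Q_n$ is by definition the collection of unordered pairs $\{w,w'\}$ that coincide in all but one coordinate, which is exactly the condition $d_H(w,w')=1$; the equivalence follows from a coordinate-by-coordinate comparison.

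Item (3) is the substantive part. The approach I would take is to exploit continuity of the truncation $\pi_n \circ \Phi : \GT \to \{0,1\}^n$, whose codomain is finite and discrete, so that each fibre $\{\sigma\in\GT : w_n(\sigma)=w\}$ is a clopen cylinder set and the family of such cylinders (as $n$ varies) supplies a neighbourhood basis for the profinite topology on $\GT$ pulled back through $\Phi$. It follows that $d_H(w_n(\sigma), w_n(\tau))=0$ means $\sigma$ and $\tau$ lie in a common level-$n$ cylinder, and by~\eqref{E:metricC} the Cantor distance between $\Phi(\sigma)$ and $\Phi(\tau)$ is strictly less than $1/2^n$; conversely, a nonzero Hamming distance at level $n$ forces a disagreement at some coordinate $k\le n$, and the Cantor distance is at least $1/2^n$. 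Thus the vanishing locus of $d_H$ at level $n$ matches precisely the scale-$1/2^n$ resolution of the profinite topology transported by $\Phi$.

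The main obstacle will be phrasing (3) honestly, since $d_H$ is strictly coarser than the Cantor metric: the latter depends only on the position of the first coordinate of disagreement, whereas $d_H$ accumulates contributions from all of them. I would therefore stop short of claiming that $d_H$ induces the profinite topology, and instead present it as a discrete combinatorial bookkeeping device recording how many of the first $n$ binary decisions must be revised in passing from $\sigma$ to $\tau$. The compatibility with the profinite topology is then asymptotic: for distinct $\sigma,\tau\in\GT$ there exists $N$ such that $d_H(w_n(\sigma), w_n(\tau))\ge 1$ for all $n\ge N$, while $d_H(w_n(\sigma), w_n(\tau))=0$ for every $n$ forces $\sigma=\tau$ by injectivity of $\Phi$.
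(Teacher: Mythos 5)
Your proposal is correct, and for items (1) and (2) it matches the paper's argument almost verbatim: both treat the hypercube identification as classical and item (2) as definitional. Where you diverge is in item (3). The paper's proof is terse and somewhat impressionistic --- it asserts that the finite projections ``capture the local combinatorial behavior'' and that $d_H=1$ is ``reflective of a corresponding neighboring relationship in the profinite topology,'' without quantifying the relationship. You instead make the link concrete: you observe that the fibres of $\pi_n\circ\Phi$ are clopen cylinders forming a neighbourhood basis, and you compare $d_H$ at level $n$ to the Cantor ultrametric of~\eqref{E:metricC}, showing that $d_H(w_n(\sigma),w_n(\tau))=0$ forces Cantor distance $<1/2^n$ while $d_H\ge1$ at level $n$ forces Cantor distance $\ge1/2^n$. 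Crucially, you also flag the genuine asymmetry the paper glosses over: $d_H$ accumulates over all coordinates of disagreement and is therefore strictly coarser than the ultrametric, which cares only about the first one, so $d_H$ cannot by itself induce the profinite topology. Your ``asymptotic compatibility'' formulation (distinct automorphisms eventually separate at some level $N$, and perpetual agreement forces equality by injectivity of $\Phi$) is the honest and precise version of what the paper merely gestures at, and it is a clear improvement in rigour on this part of the argument.
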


\begin{proof}
The identification \(\{0,1\}^n\) with the vertices of an \(n\)-hypercube is classical. By definition, two \(n\)-bit words differ in exactly one coordinate if and only if their Hamming distance is 1. Since the encoding \(\Phi\) is assumed to be a homeomorphism (or at least continuous with respect to the profinite topology on \(\GT\) and the Cantor topology on \(\{0,1\}^{\mathbb{N}}\)), the finite projections \(w_n(\tau)\) capture the local combinatorial behavior of \(\tau\). In particular, if \(d_H(w_n(\sigma),w_n(\tau))=1\), then \(\sigma\) and \(\tau\) differ by a minimal change in their binary approximations, which is reflective of a corresponding neighboring relationship in the profinite topology. This completes the proof.
\end{proof}

\medskip 

\section{The Cubic Matrioshka Algorithm for $\Gal$ and $\GT$}\label{S:CubicMa}
We establish a bijective construction between the set of elements in the absolute Galois group $\Gal$ over $\Q$ and the set of infinite binary sequences \( \{0,1\}^\mathbb{N} \) through a recursive, algorithmic approach. Beyond $\Gal$ and $\GT$, one can adapt our construction for $p$-adic integers $\mathbb{Z}_p$ (see \cite{R}) and, more generally, for any profinite group which is homeomorphic to a Cantor set. 
\begin{rem}
Note that the construction is purely topological and that the encoding need not respect algebraic operations. Furthermore, it is very important to highlight that the coding depends on how we define this homeomorphism. 
\end{rem}

\subsection{The Idea}
\begin{itemize}
\item In order to introduce our algorithmic construction, let us first take a countable basis of clopen sets $\{U_s\}_{s\in \{0,1\}^n}$ (we recall that is $s$ is a binary word) which shall be recursively partitioned into two clopen subsets. 
\item Encoding/Decoding: We map en element $x\in \Gal$ to a sequence $(a_n)\in \{0,1\}^{\mathbb{N}}$ by $a_{n+1}=0$ if $x\in U_{s0}$ else 1. 
\item One recovers $x$ via $\cap_nU_{a_1\cdots a_n}$, which converges by compactness.
\end{itemize}
 This is a homeomorphism if the clopen partitions respect the Cantor space’s self-similar structure.
  \subsection{The Cubic Matrioshka Algorithm}
  \smallskip 
  
\subsection*{Step 1: Properties of \( \Gal \)}
We recall the key topological properties of $\Gal$:\begin{itemize}
  \item \textbf{Compactness:} Every open cover of $\Gal$ has a finite subcover.
  \item \textbf{Total Disconnectedness:} The only connected subsets of $\Gal$ are singletons.
  \item \textbf{Polish:} $\Gal$ is separable and completely metrizable.
\end{itemize}

Due to its compactness and total disconnectedness, $\Gal$ is homeomorphic to the Cantor space \( \{0,1\}^\mathbb{N} \).

\subsection*{Step 2: Basis of Clopen Sets}

Being a compact, totally disconnected, and metrizable space, $\Gal$ admits a countable basis of clopen sets. We refine this basis into a recursive binary tree structure of clopen partitions that we call a cubic Matrioshka:

\begin{itemize}
  \item \textbf{The root}: The entire space $\Gal$.
  \item \textbf{Level 1:}  Partition $\Gal$ into two disjoint clopen subsets \( U_0, U_1 \).
  \item \textbf{Recursive Step}: For each clopen set \( U_s \) at level \( n \) (labeled by a finite binary string \( s \in \{0,1\}^n \)), partition it into two clopen subsets \( U_{s0}, U_{s1} \).
\end{itemize}

\subsection*{Step 3: Mapping Elements to Binary Sequences}

Given an automorphism  $g \in \Gal$, we associate to $g$ a binary sequence \( (a_n)_{n \in \mathbb{N}} \in \{0,1\}^\mathbb{N} \) where:

\[
a_n = \begin{cases}
  0 & \text{if } g \in U_{s0} \text{ at level } n, \\
  1 & \text{if } g \in U_{s1} \text{ at level } n,
\end{cases}
\]

and \( s \) represents the binary string of length \( n-1 \). This procedure defines a unique binary sequence for each element $g \in \Gal$.

\subsection*{Step 4: Inverse Mapping}

For a given binary sequence \( (a_n) \in \{0,1\}^\mathbb{N} \), we define the corresponding element \( g \in \Gal \) by:

\[
g = \bigcap_{n=1}^{\infty} U_{a_1a_2\cdots a_n}.
\]

By the compactness of $\Gal$, the intersection of these nested clopen sets contains exactly one point, which corresponds to a unique automorphism \( g \in \Gal \).

\subsection*{Step 5: Bijection}

The maps \( g \mapsto (a_n) \) and \( (a_n) \mapsto g \) are mutually inverse, thus establishing a bijection:

\[
\Gal \leftrightarrow \{0,1\}^\mathbb{N}.
\]

\begin{rem}
For other groups like \( \mathbb{Z}_p \) (the \( p \)-adic integers), the clopen partitions encode \( p \)-adic expansions instead of binary, but the same logic applies with adjustments for the basis. See \cite{R}, where one shows that $\mathbb{Z}_p\, \myeq\,  \{0,1\}^{\mathbb{N}}$ and its clopen subgroups form a binary tree structure compatible with the encoding.%See appendix for an entirely computed example.
\end{rem}

\,

Our statement below allows to attribute to every element of $\Gal$ a unique representation as an infinite binary sequence. It is important to note that the map is not canonical in the sense that the coding (and the decoding) of an element in $\Gal$  as a binary code depends on how we have defined the  clopens, during the steps of the algorithm.

\begin{prop}[$\Gal$ as a Cantor Set]\label{P:BG}
Let $\Gal$ denote the absolute Galois group over rational numbers $\Q$. Then:
Every element of $\Gal$ admits a unique representation as an infinite binary sequence.
\end{prop}

\begin{proof}
This follows from the theorem Thm.~\ref{T:Cantor} and the construction outlined above. 
\end{proof}

\, 

\begin{thm}[Subcube Structure of Automorphism Approximations]\label{T:Approx}
Let 
\[
\Phi \colon \mathrm{Gal}(\overline{\Q}/\Q) \longrightarrow \{0,1\}^{\mathbb{N}}
\]
be a fixed homeomorphism identifying the absolute Galois group with the Cantor set, and for each \(n\ge1\), let 
\[
\pi_n \colon \{0,1\}^{\mathbb{N}} \to \{0,1\}^{n}
\]
denote the projection onto the first \(n\) coordinates. Suppose that for two automorphisms 
\(\sigma,\tau\in \mathrm{Gal}(\overline{\Q}/\Q)\) the associated \(n\)-bit approximations 
\[
w(\sigma)=\pi_n(\Phi(\sigma))\quad \text{and}\quad w(\tau)=\pi_n(\Phi(\tau))
\]
satisfy
\[
w(\sigma)_i = w(\tau)_i = s_i \quad \text{for } i=1,2,\dots,p,
\]
where 
\[
s=s_1s_2\cdots s_p\in \{0,1\}^{p}
\]
is a fixed binary word. Then both \(w(\sigma)\) and \(w(\tau)\) lie in an \((n-p)\)-dimensional hypercube.
\end{thm}
\begin{proof}
By hypothesis, for each \( i=1,\dots,p \) we have 
\[
w(\sigma)_i = w(\tau)_i = s_i.
\]
Thus, the \( n \)-bit sequences \( w(\sigma) \) and \( w(\tau) \) can be written as
\[
w(\sigma) = (s_1, s_2, \dots, s_p, t_{p+1}, \dots, t_n)
\]
and
\[
w(\tau) = (s_1, s_2, \dots, s_p, u_{p+1}, \dots, u_n),
\]
where \( t_j, u_j \in \{0,1\} \) for \( j=p+1,\dots,n \). Hence, every such sequence with fixed initial segment \( s \) lies in the set
\[
\{s_1\}\times \{s_2\}\times \cdots \times \{s_p\}\times \{0,1\}^{n-p},
\]
which is naturally identified with the \((n-p)\)-dimensional hypercube. This completes the proof.
\end{proof}

Therefore, we can define a certain distance between two families of automorphisms in $\Gal$ defined by a binary sequence of length $n$, via the Hamming distance on the hypercube.

\medskip 

\section{The Cubic Matrioshka Algorithm: Applications to $\GT$}\label{S:CMAGT}
As previously, we present a canonical description of any element in the Grothendieck--Teichmüller group, denoted by $\GT$, as an infinite binary sequence. 
We refer to \cite{HS,LS,Sch} for a detailed study of the profinite version of $\GT$. 
Our construction exploits the embedding
\[
\GT\hookrightarrow \operatorname{Aut}(\widehat{F}_2),
\]
where \(\widehat{F}_2\) is the profinite completion of the free group on two generators \(F_2 = \langle x, y \rangle\). By recursively partitioning $\GT$ into clopen subsets and encoding the corresponding data, we obtain a natural bijection between elements in $\GT$ and the elements of the Cantor space \(\{0,1\}^{\mathbb{N}}\). Our attempt to make our construction for $\GT$ as explicit as possible relies on the fact that it is a subgroup  ${\rm Aut}(\widehat{F}_2)$ and results concerning congruence subgroup properties of ${\rm Aut}(\widehat{F}_2)$ in \cite{BER,E}.

\subsection*{Step 1. A Hierarchy of Congruence Subgroups}
For each \( k\ge 1 \), let \(N_k\) denote the \(k\)th congruence subgroup of \(\widehat{F}_2\), defined as the kernel of the natural projection
\[
\widehat{F}_2\to \widehat{F}_2/\Gamma_k,
\]
where \(\Gamma_k\) is a descending chain of finite-index subgroups of  \(F_2\) that induce finite quotients in $\widehat{F}_2$. Each \(N_k\) is a normal subgroup, and the quotient \(\widehat{F}_2/N_k\) is a finite group.

\subsubsection*{Step 2. Partitioning $\GT$ into Clopen Sets}

At level \(k\), partition \(\GT\) into clopen sets. For each \(\sigma\in \operatorname{Aut}(\widehat{F}_2/N_k)\), define
\[
U_{k,\sigma} = \{\, \tau\in \GT\, ;\, \tau \equiv \sigma \ (\mathrm{mod}\, N_k)\,\}.
\]

The sets \(U_{k,\sigma}\) are clopen in $\GT$ and form a basis for its profinite topology. 
The encoding $\Phi(\tau)$  uniquely identifies $\tau$ by its action at each level. 

\subsection*{Step 3: Multi-Binary Encoding }
\, 
~

$\bullet$ {\bf Determine the Number of Automorphisms}

 At each level \(k\), compute the order of the automorphism group:
\[m_k=|{\rm Aut}(\widehat{F}_2/N_k)|.\]

\medskip 

$\bullet$ {\bf  Calculate Required Bits} 

Define the number of bits needed to encode elements of ${\rm Aut}(\widehat{F}_2/N_k)$:
\( b_k = \lceil \log_2 m_k \rceil \)
\medskip 

$\bullet$ {\bf Define Coherent Bijections}: for each $k$, fix a bijection:
\[\varphi_k:{\rm Aut}(\widehat{F}_2/N_k)\to \{0,1\}^{b_k}\]
ensuring that for all  \(\tau\in \GT\), the projection satisfies: 

\[trunc_{b_{k-1}}\bigg( \varphi_k(\tau \mod N_k)\bigg)=\varphi_{k-1}(\tau \mod N_{k-1}),\]

where $trunc_{b_{k-1}}$  truncates the binary string to the first $b_{k-1}$ bits.

\medskip 

$\bullet$ {\bf Encode Elements of $\GT$.}

\smallskip
 
For \( \tau \in GT \), define its multi-bit coordinate at level \( k \):

\[
\Phi_k(\tau) = \varphi_k(\tau \mod N_k) \in \{0,1\}^{b_k}.
\]

\medskip 

$\bullet$ {\bf Adjust for Cantor Space Homeomorphism} 

If \( b_k \) is bounded, pad each \( \Phi_k(\tau) \) with zeros to a fixed length \( \max_k b_k \).

\, 

If \( b_k \) is unbounded, use variable-length blocks and metrize 
\[
\prod_{k=1}^{\infty} \{0,1\}^{b_k}
\]
as a Cantor-compatible space (e.g., via dyadic intervals).

The full encoding is the concatenation:

\[
\Phi(\tau) = (\Phi_1(\tau), \Phi_2(\tau), \dots) \in \prod_{k=1}^{\infty} \{0,1\}^{b_k}.
\]

Thus, the assignment
\[
\Phi(\tau) = (a_1, a_2, a_3, \dots) \in \{0,1\}^{\mathbb{N}}
\]
yields a well-defined bijection between $\GT$ and the Cantor set.

By exploiting the profinite topology of $\GT$ and its embedding into \(\operatorname{Aut}(\widehat{F}_2)\), we have established a rigorous recursive partitioning of $\GT$ into clopen sets. Thus, every element of $\GT$ may be uniquely encoded as an infinite binary sequence.

\begin{thm}[Profinite Grothendieck--Teichmüller Group as a Cantor Set]\label{T:Compatible}

 For each finite Galois extension $K/\Q$, let
  \[
    X \;=\; \mathbb{P}^1 \setminus \{0,1,\infty\},
    \quad
    X_K \;=\; X \times_{\Spec\, \Q} \Spec\,  K.
  \]
  Denote by
$N_K$ the normal subgroup of $\widehat{F}_2,$
 defined as the kernel of the homomorphism from the étale fundamental group $\F_2$ to the Galois group of a specific Belyi cover $C_K \to X$. 
  There exists a “Cantorian” system of binary encodings $\beta$ for each element in $\GT$ attached to finite Galois subextensions of~$\overline{\Q}$, such that:

  \begin{enumerate}
    \item[\textbf{(1)}] \textbf{Clopen Encoding of $\Gal$.}\\
      Each finite Galois extension $K/\Q$ determines a clopen subgroup
      \[
        \mathrm{Gal}(\overline{\Q}/K) \;\subset\; \Gal
      \]
      encoded by a finite binary word $b_K\in\{0,1\}^n$.  The inverse system of these clopen subgroups is a Cantor‐like binary tree; infinite paths correspond to nested sequences
      \[
         \mathrm{Gal}(\overline{\Q}/K_1)\;\supset\; \mathrm{Gal}(\overline{\Q}/K_2)\;\supset\;\cdots.
      \]

    \item[\textbf{(2)}] \textbf{Action on Étale Covers.}\\
      For each $K$, the group $\Gal$ acts on the finite étale covers of $X_K$ (equivalently on the finite quotients $\widehat{F}_2/N_K$).  This action, when expressed in a binary “Cantorian” language, assigns to each $K$ a finite binary sequence $\beta_K\in\{0,1\}^m$, recording how generators of $\widehat{F}_2$ are permuted and twisted.

    \item[\textbf{(3)}] \textbf{Compatibility with $\GT$.}\\
      Under the embedding $\iota$, the encoding $b_K$ of the clopen subgroup $ \mathrm{Gal}(\overline{\Q}/K)$ is sent to the encoding $\beta_K$ describing the induced automorphism of $\widehat{F}_2/N_K$.  The assignment
      \[
        b_K \;\longmapsto\; \beta_K
      \]
      preserves the binary‐tree structure and is continuous with respect to the profinite topologies on $\Gal$ and~$\GT$.

    \item[\textbf{(4)}] \textbf{Profinite Coherence.}\\
      For each $K$, let $\mathrm{proj}_K: G_{\Q}\to \mathrm{Gal}(K/\Q)$ denote the natural projection. Then the following diagram of binary‐encoded actions commutes:
      \[
        \begin{tikzcd}
          \Gal \arrow[r,"\iota"]\ar[d,"\mathrm{proj}_K"']
          & \GT\ar[d,"\mathrm{proj}_K^{\mathrm{GT}}"]\\
          \mathrm{Gal}(K/\Q)\ar[r,"\simeq"]
          & \Out\bigl(G_K\bigr).
        \end{tikzcd}
      \]
      where $G_K$ denotes the quotient $\F_2 / N_K$. 
  \end{enumerate}

\end{thm}
\begin{rem}
It is important to note that the map \[\Phi\colon \GT \longrightarrow \{0,1\}^{\mathbb{N}}\] is {\it non-canonical}, which means that 
different choices of \(\Phi\) yield different binary representations of the same element \(g\). 
\end{rem}
\begin{proof}
The theorem is proven using the theory of \emph{Belyi dessins d'enfants}, which provide a combinatorial and geometric framework for understanding the action of the absolute Galois group on étale covers of $\mathbb{P}^1 \setminus \{0,1,\infty\}$. In particular, the absolute Galois group $\Gal$ acts on the étale fundamental group of 
$X=\mathbb{P}^1\setminus\{0,1,\infty\}$, which is the profinite completion of the free group on two generators, 
$\F_2$. The Grothendieck-Teichmüller group is a subgroup of the outer automorphism group of $\F_2$.
Belyi's theorem says that a smooth projective curve over $\Q$ can be defined over a number field if and only if it admits a Belyi morphism, i.e., a morphism to $\mathbb{P}^1$  ramified only at $\{0,1,\infty\}$.

\subsection*{Step 1: Using the Canonical Embedding $\iota: \Gal \hookrightarrow \GT$}

The étale fundamental group of $X_{\overline{\mathbb{Q}}} = \mathbb{P}^1 \setminus \{0,1,\infty\}$ over $\overline{\mathbb{Q}}$ is the profinite completion of the free group on two generators, denoted $\widehat{F}_2$. The absolute Galois group $\Gal$ acts on $\widehat{F}_2$ via its action on étale covers, inducing an outer automorphism. This defines a homomorphism:
\[
\iota: \Gal \to \mathrm{Out}(\widehat{F}_2).
\]
By the Belyi embedding theorem, this homomorphism is injective. Moreover, the image of $\iota$ lies in the Grothendieck–Teichmüller group $\GT \subset \mathrm{Out}(\widehat{F}_2)$, as established by Drinfeld. Thus, $\iota$ is a canonical embedding.

\subsection*{Step 2: Clopen Encoding of $\Gal$}
For each finite Galois extension $K/\mathbb{Q}$, the clopen subgroup $$U_K = \mathrm{Gal}(\overline{\mathbb{Q}}/K) \subset \Gal$$ corresponds to the kernel of the projection $\Gal \to \mathrm{Gal}(K/\mathbb{Q})$.

\begin{enumerate}[label=\alph*]
    \item By Belyi's theorem, there exists a Belyi cover $C_K \to X = \mathbb{P}^1 \setminus \{0,1,\infty\}$ defined over $K$ with field of moduli exactly $K$.
    \item Using Hilbert irreducibility and general position arguments, choose $C_K$ such that its stabilizer in $\Gal$ is exactly $U_K$.
    \item Enumerate all isomorphism classes of Belyi covers in canonical order and assign to $C_K$ a binary word $b_K \in \{0,1\}^n$ encoding its isomorphism class.
\end{enumerate}

The inverse system of clopen subgroups $\{U_K\}$ forms a Cantor-like binary tree:

\begin{itemize}
    \item Fix a cofinal sequence of Galois extensions $\mathbb{Q} = K_0 \subset K_1 \subset K_2 \subset \cdots$ with $\bigcup_{i=0}^\infty K_i = \overline{\mathbb{Q}}$ (e.g., $K_n$ is the compositum of all Galois extensions of degree $\leq n$).
    \item The binary words $\{b_{K_i}\}$ are assigned to nodes of a binary tree; paths correspond to extension sequences.
\end{itemize}

Infinite paths correspond to nested sequences of clopen subgroups:
\[
\mathrm{Gal}(\overline{\mathbb{Q}}/K_{i_1}) \supset \mathrm{Gal}(\overline{\mathbb{Q}}/K_{i_2}) \supset \cdots
\]

\subsection*{Step 3: Action on Étale Covers and Encoding $\beta_K$}

For each finite Galois extension $K/\mathbb{Q}$, the group $\Gal$ acts on finite étale covers of $X_K = X \times_{\mathrm{Spec}\,\mathbb{Q}} \mathrm{Spec}\,K$. This action factors through $\mathrm{Gal}(K/\mathbb{Q})$ and is equivalent to the action on finite quotients of $\widehat{F}_2$ defined over $K$.

To define the encoding $\beta_K \in \{0,1\}^m$:

\begin{itemize}
    \item Define $N_K = \ker(\widehat{F}_2 \to \mathrm{Gal}(C_K/X))$, so $\widehat{F}_2/N_K \cong G_K := \mathrm{Gal}(C_K/X)$.
    \item The action of $\Gal$ on $\widehat{F}_2/N_K$ is determined by $\iota(\sigma)$ for $\sigma \in \Gal$.
    \item Encode this action with a binary word $\beta_K$ by recording the images of the generators under a set of representatives in $\mathrm{Out}(G_K)$.
\end{itemize}

\subsection*{Step 4: Compatibility with $\mathrm{GT}$'s Cantorian words}

The assignment $b_K \mapsto \beta_K$ preserves structure and is continuous:

\paragraph{Tree Structure Preservation:}
\begin{itemize}
    \item Choose covers $\{C_K\}$ compatibly, that is if $K \subset K'$, let $C_{K'} \to C_K \to X$ be a tower.
    \item Then, $b_K, b_{K'}$ lie on the same tree path.
    \item Groups $G_K, G_{K'}$ satisfy $G_{K'} \twoheadrightarrow G_K$, and actions $\beta_K, \beta_{K'}$ are compatible.
\end{itemize}

\paragraph{Continuity:}
The encodings $b_K$, $\beta_K$ are finite and locally constant in the profinite topology of $\Gal$, matching the structure of clopen cosets $U_K$.

\subsection*{Step 5: Profinite Coherence}

For each $K/\mathbb{Q}$, define:
\[
\mathrm{proj}_K: \Gal \to \mathrm{Gal}(K/\mathbb{Q}), \quad
\mathrm{proj}^{\mathrm{GT}}_K: \GT \to \mathrm{Out}(\widehat{F}_2/N_K) = \mathrm{Out}(G_K)
\]

Let $\theta: \mathrm{Gal}(K/\mathbb{Q}) \to \mathrm{Out}(G_K)$ be defined by the Galois action on $C_K$:
\[
\theta(\tau) = \text{outer automorphism class induced by } \tau \text{ on } G_K.
\]

This $\theta$ is an isomorphism (choose $C_K$ with faithful Galois action). Then the following diagram commutes:

\[
\begin{tikzcd}
\Gal\arrow[r, "\iota"] \arrow[d, "\mathrm{proj}_K"'] & \GT \arrow[d, "\mathrm{proj}^{\mathrm{GT}}_K"] \\
\mathrm{Gal}(K/\mathbb{Q}) \arrow[r, "\theta"] & \mathrm{Out}(G_K)
\end{tikzcd}
\]

\subsection*{Conclusion}

The canonical embedding $\iota$ and the ``Cantorian'' binary encodings $(b_K, \beta_K)$ satisfy all conditions of the theorem. These constructions depend on Belyi's theorem, the combinatorics of dessins d'enfants, and the profinite structures of $\Gal$ and $\GT$. The binary tree structure is preserved, the assignment is continuous, and the diagram commutes, ensuring compatibility.

\end{proof}

This combinatorial decomposition not only provides a clear structural insight into the nature of $\GT$ but also reinforces its role as the universal symmetry group in the deformation theory of braided monoidal categories. The explicit computation of the bijection, however, remains a challenging open problem, inviting further advances in the interplay between profinite group theory and deformation theory.

\begin{rem}
In the scope of an explicit construction of $\Phi$ an alternative approach is to use $\GT$-shadows, which approximate $\GT$ and which can provide a more algebraic and combinatorial insight, due to their explicit character. 
\end{rem}

\subsection{$\GT$-shadow Alternative}
 $\GT$-shadows \cite{DG} are finite-level approximations of $\GT$ group elements, acting on truncated structures.  They encode local data (automorphisms modulo congruence subgroups) that must glue coherently to form global $\GT$ elements. This approach mirrors how 
$p$-adic integers are built from residues modulo $p^k$, but enriched with extra constraints. 

\, 

Due to their explicit character,  $\GT$-shadows provide an interesting tool for our Cubic Matrioshka Algorithm.  $\GT$-shadows are defined in terms of their action on the profinite completion of the braid group $\widehat{B}_4$. These objects can be organized into a hierarchy, where each level refines the approximation of $\GT$ elements.

\, 

\subsection*{Step 1: Define Explicit Congruence Subgroups}

Let $\widehat{B}_4$ denote the profinite completion of the braid group $B_4$. Define a descending chain of congruence subgroups:
\[
N_1 \supset N_2 \supset \cdots \supset N_k \supset \cdots
\]
satisfying:

\begin{enumerate}
    \item \textbf{Normality:} Each $N_k$ is normal in $\widehat{B}_4$.
    \item \textbf{Finite Quotients:} $\widehat{B}_4 / N_k$ is finite.
    \item \textbf{Trivial Intersection:} $\bigcap_{k=1}^\infty N_k = \{1\}$.
\end{enumerate}

We refer to \cite{Dr, I, M, HS,PS} and \cite{LS} for results on $\widehat{B}_4$ proving the existence of such chains.

\subsection*{Step 2: Multi-Bit Encoding at Each Level}

At level $k$, the finite group $GT_k = \operatorname{Aut}(\widehat{B}_4 / N_k)$ has order $m_k$. Define:

\begin{enumerate}
    \item \textbf{Bits per Level:} Use $b_k = \lceil \log_2 m_k \rceil$ bits to encode elements of $GT_k$.
    \item \textbf{Labels:} Fix a bijection $\varphi_k: GT_k \to \{0,1\}^{b_k}$ (e.g., via enumeration).
\end{enumerate}

For $\tau \in \operatorname{GT}$, define its multi-bit coordinate at level $k$ as:
\[
\varphi_k(\tau \mod N_k) \in \{0,1\}^{b_k}.
\]
It is important to have a consistent labeling. Suppose at level $k$, one encode an automorphism 
$\tau\in \GT$ as $\varphi_k(\tau \mod N_k) \in \{0,1\}^{b_k}$. For the encoding $\varphi(\tau)$ to represent a valid element of 
$\GT$, the labels must satisfy: \[\text{Projection}(\varphi_k(\tau\mod N_k))=\varphi_{k-a}(\tau\mod N_{k-1}),\] i.e., truncating the level-$k$ encoding must recover the level-$(k-1)$ encoding.

Moreover, to ensure compatibility, $\varphi_k$ must be defined recursively, aligning with prior levels (e.g., using lexicographic order or tree-based prefixes).

\subsection*{Step 3: Profinite Encoding}

The full encoding of $\tau$ is the concatenation of multi-bit coordinates:
\[
\Phi(\tau) = (\varphi_1(\tau \mod N_1), \varphi_2(\tau \mod N_2), \ldots) \in \prod_{k=1}^\infty \{0,1\}^{b_k}.
\]
This space is homeomorphic to $\{0,1\}^\mathbb{N}$ if $b_k$ is bounded (adjust via padding if necessary).

 In particular, if $b_k$ grows unboundedly, padding must standardize the bit-length across levels (the concrete method can be applied, such as fixed-length blocks with leading zeros).
 
\subsection*{Step 4: Inverse Limit Compatibility}

Ensure coherence across levels by enforcing:

\begin{enumerate}
    \item \textbf{Projections:} The encoding $\Phi(\tau)$ satisfies $\varphi_k(\tau \mod N_k) \to \varphi_{k-1}(\tau \mod N_{k-1})$ under natural projections.
    \item \textbf{Equations:} Verify that the encoded sequences satisfy $\GT$'s defining relations (e.g., Ihara's equations) at each level.
\end{enumerate}

We refer to \cite{LS} for a details on the coherence in $\GT$.

\subsection*{Step 5: Homeomorphism with Cantor Space}

If $\prod_{k=1}^\infty \{0,1\}^{b_k}$ is metrized compatibly (e.g., via product topology), the map $\Phi$ becomes a homeomorphism.

\begin{rem}
While GT shadows provide a theoretical framework, explicitly defining the congruence subgroups $N_k$ and the clopen sets $U_{k,\sigma}$ requires deep combinatorial and algebraic insights. Ensuring that the binary encoding respects $\GT$'s defining equations is non-trivial and may require ad hoc adjustments.
\end{rem}

\section{Conclusions}
\subsection*{The Grothendieck Conjecture holds in {\bf ProFin}}
To conclude, this topological insight does not directly resolve whether $\GT$ is isomorphic to $\Gal$ in the category of profinite groups
but it underscores the richness of $\GT$ as a central object in modern mathematics and suggests the feasibility of this conjecture, since proving the existence of this homeomorphism is a necessary condition.

\subsection*{Explicitly constructing elements in  $\GT$ and $\Gal$}
A homeomorphism between $\GT$ and the Cantor space highlights its role as a universal object encoding profound topological and arithmetic symmetries and our construction shows that in the category of profinite spaces Grothendieck's conjecture is true. Our encoding of every element in $\Gal$ suggests a certain possibilities of explicitly computing every element in $\Gal$, via approximations relying on binary words.

\subsection*{Galois Grothendieck Path Integral}
Our construction of the Galois Grothendieck Path Integral  provides a natural summation over all trajectories in $\Gal$ (resp. $\GT$) and leads to a new categorical and measure-theoretic invariant of $\GT$ and $\Gal$. This reveals deep structural parallels between profinite arithmetic symmetries and quantum field-theoretic dynamics and opens new sources of possible investigations. 

\appendix 
\section{The Absolute Galois group} \label{S:2}
We consider a specific object in the category {\bf ProFingrp}: the absolute Galois group $\Gal=\operatorname{Gal}(\overline{\Q}/\Q)$ over the field of rational numbers $\Q$. 
Given a field $k$, the Galois group $\operatorname{Gal}(K/k)$ of an algebraic extension $K$ of $k$ is defined as the group of all automorphisms of $K$ that fix every element of $k$. In case of $k=\Q$, the absolute Galois group $\Gal$ consists of all automorphisms of the algebraic closure $\overline{\Q}$ that fix $\Q$.

\, 
The group $\Gal$ is a profinite group (\cite[Prop. 3.1.1]{Wl}), meaning it is compact, totally disconnected, and is the inverse limit of finite groups. 
It can be described as the inverse limit of the finite Galois groups $\operatorname{Gal}(L/\Q)$, where $L$ runs over all finite Galois extensions of $\Q$, i.e., the set of all subfields of $\overline{\Q}$ that are finite Galois extensions of $\Q$. Explicitly, we have:
\[\Gal=\varprojlim\limits_{L/\Q\atop \text{finite Galois}}\operatorname{Gal}(L/\Q),\] where the projective system is indexed by finite Galois extensions $L/\Q$.

\,

Since $\Gal$ is the limit of the finite Galois groups $\operatorname{Gal}(L/\Q)$ it inherits, as such, a natural topology,  referred to as the profinite topology, in which the open sets correspond to the cosets of open normal subgroups in the finite Galois groups $\operatorname{Gal}(L/\Q)$. This topology is compact and totally disconnected.

\, 

\subsection{The Clopens of $\Gal$}
For every finite Galois extension $L/\Q$, there is a natural continuous surjective homomorphism:

\[\pi_L:\Gal\to \operatorname{Gal}(L/\Q),\quad \sigma\mapsto \sigma|_L\] where every automorphism $\sigma\in \Gal$ restricts to an automorphism of $L$, since since $L/\Q$ is Galois. Such maps are compatible over nested extensions. These homomorphisms are well defined and continuous in the profinite topology. 

\subsubsection{} Specifically, for a finite extension $L$, the kernel 

\[N_L=\ker(\Gal\to \operatorname{Gal}(L/\Q))\]
consists of all automorphisms of $\overline{\Q}$ fixing $L$ pointwise, i.e.,
$$N_L=\{\sigma \in \Gal\, ;\,  \sigma|_L=id_L\}=\operatorname{Gal}(\overline{\Q}/L).$$
The subgroup $N_L$ is clopen (i.e. open and closed) and normal in $\Gal$, because $L/\Q$ is Galois. 

\, 

\subsubsection{}
A {\it neighbourhood} of an automorphism $\sigma\in \Gal$ is a coset of $N_L$ in $\Gal$ defined as:
 \[\sigma\cdot N_L=\{\sigma\cdot \nu\, ;\,  \nu \in N_L\},\] where $\nu$ fixes $L$. Precisely, this contains all automorphisms $\tau$, that can be written as $\tau =\sigma\cdot \nu$ for some $\nu\in N_L$ and since $\nu$ fixes $L$, they agree with $\sigma$ on $L$, i.e.:
\[\tau|_L=(\sigma\cdot \nu)|_L=\sigma|_L\cdot \nu|_L=\sigma|_L\cdot id|_L=\sigma|_L.\]

The cosets $\sigma\cdot N_L$ for $\sigma \in \Gal$ partition $\Gal$ into finitely many disjoint clopen sets, each corresponding to an element of the finite quotient  $\operatorname{Gal}(L/\Q)$. This decomposition refines as $K$ increases, forming the inverse limit topology.

\, 

The group $N_L$ is itself a profinite group: 
\[\operatorname{Gal}(\overline{\Q}/L)=\varprojlim\limits_{K/L\atop \text{finite Galois}}\operatorname{Gal}(K/L).\]
We refer to \cite[1.3]{NSW}, \cite[11.4.1]{Ser}.

\subsubsection{}
For nested finite Galois extensions $K\subset L$, there is a restriction map:

\[\operatorname{res}_{L/K}:\operatorname{Gal}(K/\Q)\to\operatorname{Gal}(L/\Q),\]
which fits into the inverse limit diagram. 

In particular for nested Galois extensions the following fact occurs: 
If $L\subset K$, then  $N_K\subset N_L$ and the coset $\sigma\cdot N_K$	
is a smaller neighborhood of $\sigma$, refining the action to agree on the larger field 
$K$.

\, 

\subsubsection{} We illustrate the above construction on an example. Take for instance the tower of cyclotomic extensions $\Q(\zeta_{p^n})$ for a fixed prime $p$:

\[\Q\subset \Q(\zeta_p)\subset  \Q(\zeta_{p^2})\subset \Q(\zeta_{p^3})\subset \cdots \]
where $\zeta_{p^n}$ is a primitive $p^n$-th roots to unity. The Galois group of each extension $\Q(\zeta_{p^n})/\Q$ is given by $$\operatorname{Gal}(\Q(\zeta_{p^n})/\Q)\cong (\mathbb{Z}/p^n\mathbb{Z})^{\times}$$ the multiplicative group of integers modulo $p^n$.
  
The tower $\Q(\zeta_{p^n})$ forms a directed system of extensions. The absolute Galois group $\Gal$ projects onto each finite Galois group $\operatorname{Gal}(\Q(\zeta_{p^n})/\Q)\cong (\mathbb{Z}/p^n\mathbb{Z})^{\times},$ via reduction modulo $p^n$. The inverse limit over all natural numbers $n$ gives the profinite completion \[\mathbb{Z}_p^{\times}=\varprojlim\limits_n(\mathbb{Z}/p^n\mathbb{Z}),\] which is a quotient of $\Gal$.

The projection map from $\Gal$ is given by the natural surjective homomorphism :
\[\Gal \twoheadrightarrow \mathbb{Z}_p^{\times},\] defined by restricting an automorphism $\sigma\in \Gal$ to act on the $p$-power cyclotomic extensions. The kernel $\Gal \to \mathbb{Z}_p^{\times}$ is a closed normal subgroup of $\Gal$.

\subsection{Discrete vs. Profinite Topologies}
It is noteworthy to remark that the profinite absolute Galois group is endowed with the profinite topology in contrast with the finite Galois groups, which occur in the definition $$\Gal=\varprojlim\limits_{L/\Q\atop \text{finite Galois}}\operatorname{Gal}(L/\Q)$$ and which, individually, have a discrete topology. We recall below the standard notion of an {\it isolated point}, which is taken from topology. 

\smallskip 

Let $X$ be a topological space, and let $x\in X$. The point $x$ is said to be isolated if the singleton $\{x\}$ is an open set in $X$. Applying this concept in the context of Galois groups, we say that an elements $\sigma$ is isolated if $\{\sigma\}$ is an open set in the Galois group. This implies that  the element $\sigma$ is uniquely determined by its action on some finite Galois extension $K/\Q$. 

\, 

In a finite Galois group $\operatorname{Gal}(L/\Q)$, every singleton set is open. This makes every element isolated. In the context of the previous example, where $L=\Q(\zeta_{p^n})$, let us consider a Frobenius automorphism $\operatorname{Frob}_p\in \operatorname{Gal}(L/\Q)\cong(\mathbb{Z}/p^n\mathbb{Z})^{\times}$. Topologically, it is considered as isolated, because the group $\operatorname{Gal}(L/\Q)$ is a finite abelian group and endowed with the discrete topology, where every singleton subset is clopen. The Frobenius automorphism (unramified in this extension) acts on  $\zeta_{p^n}$ via
\[F:\zeta_{p^n}\mapsto \zeta_{p^n}^p,\]
lifting the residual Frobenius $x\mapsto x^p \mod p^n$.
In particular, this means that $\operatorname{Frob}_p$ has no limit points in the group. 

\medskip

However, if we consider $\Gal$ its profinite topology (meaning it is compact, Hausdorff  and totally disconnected) is not discrete. Specifically, this implies that it has no isolated points.  In particular, the Frobenius lifts $\operatorname{Frob}_p$ discussed above, in $\Gal$, are {\it no longer} isolated. Any open neighborhood of $\operatorname{Frob}_p$ contains infinitely many other automorphisms that agree with $\operatorname{Frob}_p$ on $L$ but differ on larger extensions. Frobenius elements are dense in $\Gal$, by Chebotarev's density theorem.

\section{The Absolute Galois Group from the Polish (group) Perspective}\label{S:3}

\subsection{The category of Polish Spaces {\bf PolSp}}

One can define as a full subcategory of the category of topological spaces the category of Polish spaces {\bf PolSp}. In this category objects are all Polish spaces and morphisms are continuous maps between Polish spaces. We refer to \cite{B}, \S 6 Chap. IX and \cite{Kh1,Kh2} for more information on Polish spaces. 

\, 

A Polish space is a topological space that is separable and completely metrizable. That is, there exists a metric on the space that induces the topology and makes it a complete metric space, and the space has a countable dense subset.

\, 

\subsection{The category of Polish Groups {\bf PolGr}}

 A Polish group is a topological group which is also a Polish space, with continuous group operations. Polish groups are widely studied because they arise naturally in many areas of mathematics, including descriptive set theory, dynamical systems, and topological group theory.  Polish groups also form a category  {\bf PolGr} where objects are Polish groups and morphisms are continuous group homomorphisms between Polish groups. These morphisms must preserve the group structure and be continuous with respect to the topology.

\subsection{The full subcategory of t.d.l.c Polish groups} A particularly interesting class of Polish groups are the totally disconnected locally compact (t.d.l.c) Polish groups. These are Polish groups (separable, completely metrizable topological groups) that are additionally totally disconnected and locally compact. 

\, 

This is a subclass of Polish groups, satisfying the subcategory requirement for objects. The morphisms are continuous group homomorphisms between t.d.l.c. Polish groups. These morphisms are identical to those in the parent category (the category of Polish groups), restricted to the subclass of t.d.l.c. Polish groups. 

\, 

The t.d.l.c. Polish groups form a full subcategory because the morphisms between them are exactly the same as in the parent category (there are no additional restrictions imposed). 

\, 

Such groups include profinite groups (inverse limits of finite groups with the discrete topology), automorphism groups of countable structures, and certain groups of homeomorphisms. The structure of these groups is deeply connected with harmonic analysis, $p$-adic analysis, and the theory of locally compact groups. 

\, 
\subsection{Classification of  t.d.l.c Polish groups}
According to \cite[Cor. 1.7]{W}, any totally disconnected locally compact Polish group $G$ must be homeomorphic to one of the following:

\begin{enumerate}

\item A discrete space of at most countable size.

\item The Cantor set $\cC$.

\item The space $\cC \times \mathbb{N}$ with the product topology.
\end{enumerate}

We shortly discuss this.

\,

\begin{itemize}
\item If $G$ is countable, then it must have the discrete topology (since a countable, totally disconnected Polish space is discrete). This corresponds to case (1).

\, 

\item If $G$ s uncountable, but compact, then it is homeomorphic to  $\cC$. This follows from the well-known characterization of compact, totally disconnected, metrizable spaces as homeomorphic to a closed subspace of 
$\cC$, and for Polish spaces, this must be exactly $\cC$ if it is uncountable.

\, 

\item If $G$ is non-compact, then since it is locally compact and totally disconnected, it can be expressed as a countable union of compact open sets. It follows from a classical theorem of van Dantzig that such a group has a basis of compact open subgroups. Moreover, the space of cosets of a compact open subgroup in a locally compact totally disconnected Polish group behaves like a countable discrete space. Thus, one can show that 
 $G$ is homeomorphic to $\cC\times \mathbb{N}$.

\end{itemize}

\begin{prop}
Let 
$$\Gal=Gal( \overline{\Q}/ \Q)$$ be the absolute Galois group of $\Q$, endowed with its natural profinite topology. Then $\Gal$ is a compact, totally disconnected topological group. Moreover, $\Gal$ is a Polish group.

\end{prop}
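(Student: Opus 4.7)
The plan is to verify the three topological claims in turn by working directly from the inverse-limit presentation $\Gal=\varprojlim_{L/\Q\text{ fin. Galois}}\operatorname{Gal}(L/\Q)$ recalled in Section~\ref{S:2}, and then to bundle separability, metrizability, and completeness into the Polish conclusion.

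First, I would view $\Gal$ as a closed subgroup of the Tychonoff product $\prod_{L}\operatorname{Gal}(L/\Q)$, where each finite factor carries the discrete topology. The product is compact by Tychonoff's theorem, and the compatibility relations with the restriction maps $\operatorname{res}_{L/K}$ cut out a closed subset, so $\Gal$ inherits compactness. Continuity of multiplication and inversion on the product transfers to $\Gal$ by the universal property of the inverse limit and continuity of the projections $\pi_L$, making $\Gal$ a topological group. For total disconnectedness I would appeal to the basis of clopen cosets $\sigma\cdot N_L$ exhibited in Section~\ref{S:2}: any two distinct elements $\sigma\neq\tau$ of $\Gal$ must eventually disagree on some finite Galois extension $L$, producing a clopen partition that separates them, so the only connected subsets of $\Gal$ are singletons.

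The substantive remaining property is Polishness. The key observation is that the indexing system of the inverse limit is countable: since $\overline{\Q}$ is a countable field (being algebraic over the countable field $\Q$), there are only countably many finite Galois subextensions $L\subset\overline{\Q}$. Each quotient $\Gal/N_L$ is finite, so the family of clopen cosets $\{\sigma\cdot N_L\}$ forms a \emph{countable} basis of the profinite topology. Thus $\Gal$ is second countable, and in particular separable. By Urysohn's metrization theorem---or, more concretely, by an explicit ultrametric of the form $d(\sigma,\tau)=2^{-k}$, where $k$ is the smallest index at which $\sigma$ and $\tau$ disagree along a chosen enumeration of the finite Galois tower---$\Gal$ is metrizable. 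Finally, any compact metric space is automatically complete, since every Cauchy sequence admits a convergent subsequence by compactness and therefore converges; hence $\Gal$ is completely metrizable, and so Polish.

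The main conceptual point---and the only place where anything could fail---is the countability of the family of finite Galois subextensions of $\overline{\Q}/\Q$. This is precisely what separates $\Gal$ from a generic profinite group, which can easily arise as an uncountable product of finite groups and thus fail to be metrizable. Once countability of the index set is in hand, the remainder of the argument is the standard packaging of second-countable compact Hausdorff totally disconnected spaces, and it is exactly this Polish structure that will enable the Cantor-set identification and the Cubic Matrioshka Algorithm in the sections that follow.
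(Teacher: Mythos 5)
Your argument is correct and follows essentially the same route as the paper's own proof: identify $\Gal$ with an inverse limit of finite discrete groups to get compactness and total disconnectedness, observe that there are only countably many finite Galois extensions of $\Q$ so the clopen basis is countable, then deduce second countability, metrizability via Urysohn, and completeness from compactness. The extra detail you supply — realizing $\Gal$ as a closed subgroup of the Tychonoff product, the clopen-separation argument, and the optional explicit ultrametric — is sound but merely elaborates the same underlying argument rather than offering a genuinely different one.
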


\begin{proof} 
Recall that $\Gal$ is the inverse limit of the finite Galois groups $\Gal(L/\Q)$ as $L$ ranges over the finite Galois extensions of 
$\Q$. Each $\operatorname{\Gal}(L/\Q)$ is finite and equipped with the discrete topology, and hence the inverse limit 
$\Gal$ inherits the profinite topology. In particular,
$\Gal$  is compact, Hausdorff, and totally disconnected.

\smallskip 

The open neighborhoods of the identity in 
$\Gal$ are given by the clopen normal subgroups 
\[N_L=\operatorname{Gal}(\overline{\Q}/L),\]
	
where $L/\Q$ is a finite Galois extension. Since every finite extension of 
$\Q$ is defined by the roots of a polynomial with rational coefficients, and the set of such polynomials is countable, there are only countably many finite Galois extensions of $\Q$. Hence, the family $\{N_L\}$ forms a countable basis for the topology of 
$\Gal$.

\smallskip 

Being a compact Hausdorff group with a countable basis, 
$\Gal$  is second countable. By Urysohn’s metrization theorem, it follows that $\Gal$ is metrizable. Moreover, every compact metric space is complete and separable. Therefore, $\Gal$ is a Polish group.
 \end{proof}
\begin{prop}
Let 
\[
\Gal = \operatorname{Gal}(\overline{\Q}/\Q)
\]
denote the absolute Galois group of $\Q$, considered as an object in the category $\mathbf{ProFin}$ of profinite spaces. Then $\Gal$ is perfect; that is, under the canonical forgetful functor $\mathbf{ProFin} \to \mathbf{Top}$, the underlying topological space of $\Gal$ has no isolated points.
\end{prop}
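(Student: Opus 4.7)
The plan is to show directly that no singleton $\{\sigma\} \subset \Gal$ is open, which reduces to exhibiting, for each basic neighborhood of $\sigma$, at least one distinct element lying inside it.

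First I would recall from Section~\ref{S:2} that a neighborhood basis of any $\sigma \in \Gal$ is given by the cosets $\sigma \cdot N_L$, where $L/\Q$ ranges over finite Galois extensions and $N_L = \operatorname{Gal}(\overline{\Q}/L)$. Hence $\sigma$ is isolated precisely when some basic neighborhood $\sigma \cdot N_L$ collapses to $\{\sigma\}$, equivalently $N_L = \{\operatorname{id}\}$, equivalently $\overline{\Q} = L$. The statement therefore reduces to the well-known fact that $\overline{\Q}/\Q$ is not a finite extension.

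To finish, for any prescribed finite Galois $L/\Q$ I would exhibit a strictly larger finite Galois extension $L' \supsetneq L$ inside $\overline{\Q}$---for instance via the cyclotomic tower used at the end of Section~\ref{S:2}, whose Galois groups $(\mathbb{Z}/p^n\mathbb{Z})^\times$ grow without bound, or simply by adjoining $\sqrt{p}$ for a prime $p$ unramified in $L$ and then passing to the Galois closure over $\Q$. The restriction map $N_L \twoheadrightarrow \operatorname{Gal}(L'/L)$ is then a surjection onto a nontrivial finite group, supplying a nonidentity element $\nu \in N_L$. Setting $\tau := \sigma \nu$, one has $\tau \in \sigma \cdot N_L$ but $\tau \neq \sigma$, because $\tau|_{L'} \neq \sigma|_{L'}$. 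Hence $\sigma \cdot N_L \neq \{\sigma\}$, so $\sigma$ is not isolated.

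The main obstacle here is only cosmetic: one must ensure that the enlargement $L'$ is Galois over $\Q$ and not merely over $L$, which is handled by replacing $L'$ with its normal closure (still finite over $\Q$). A shorter packaging-level alternative would be the observation that if any $\sigma$ were isolated, compactness of $\Gal$ together with homogeneity under translation would force the entire group to be finite discrete, contradicting, for example, the surjection $\Gal \twoheadrightarrow \mathbb{Z}_p^{\times}$ recalled in Section~\ref{S:2}.
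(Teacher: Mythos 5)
Your proof is correct and follows essentially the same route as the paper: both reduce the claim to observing that every basic neighborhood $\sigma N_L$ strictly contains $\{\sigma\}$ because $N_L$ is nontrivial. You fill in the justification of that nontriviality more explicitly (by producing a strictly larger finite Galois extension $L' \supsetneq L$ and a nonidentity element of $N_L$ restricting nontrivially to $L'$), whereas the paper merely asserts it from the infinitude of $\Gal$, so your version is the more self-contained of the two.
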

\begin{proof}
Recall that the absolute Galois group $\Gal$ is endowed with the profinite topology, which is defined as the inverse limit of the finite Galois groups
$\operatorname{Gal}(L/\Q)$ for finite Galois extensions $L/\Q$. In this topology, a fundamental system of open neighborhoods of any element $\sigma\in \Gal$ is given by the cosets 

\[\sigma N_L,\quad \text{where}\, N_L=\operatorname{Gal}(\overline{\Q}/L).\]
This subgroup consists of all automorphisms in \( \Gal \) that act trivially on \( L \).

Each subgroup $N_L$ is a clopen normal subgroup. Since $\Gal$ is infinite (indeed, it is not a discrete space), no such subgroup $N_L$ is trivial. Therefore, every open coset $\sigma N_L$ contains more than one element.

We demonstrate  by contradiction that no element in $\Gal$ can be isolated.

Assume for contradiction that $\sigma$ is an isolated point in $\Gal$. Then there exists an open neighborhood $U$ of $\sigma$ with $U=\{\sigma\}.$
But, by the profinite topology, $U$ must contain a coset of the form $\sigma N_L$  for some finite  Galois extension $L/\Q$. Since $N_L$ is nontrivial, the coset $\sigma N_L$ contains at least two distinct elements, contradicting $U=\{\sigma\}$.

Therefore, no element in \( \Gal \) is isolated, which implies that $\Gal$ is a perfect space. 
\end{proof}

Finally, we  deduce that since $\Gal$ is non-discrete, metrizable and profinite group it is homeomorphic to the Cantor set:
\begin{prop}
Let 
\[
\Gal = \operatorname{Gal}(\overline{\Q}/\Q)
\]
be the absolute Galois group of $\Q$, viewed as an object in the category of profinite spaces. Then $\Gal$  is homeomorphic to the Cantor space. Equivalently, one has
\[
\Gal\quad \myeq\quad  \{0,1\}^{\mathbb{N}},
\]
where  $\quad \myeq\quad $ denotes a homeomorphism in the category of topological spaces. \end{prop}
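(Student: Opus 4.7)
The plan is to invoke the classical Brouwer characterization theorem, which asserts that any nonempty, compact, metrizable, totally disconnected topological space without isolated points is homeomorphic to $\{0,1\}^{\mathbb{N}}$. All the topological hypotheses required by this theorem have already been verified in the two preceding propositions of this section, so the proof reduces essentially to an assembly of what has been proved, together with a brief check of nontriviality.

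First, I would record that the first proposition gives that $\Gal$, endowed with its profinite topology, is compact, Hausdorff, second countable, and totally disconnected, and that by Urysohn's metrization theorem it is metrizable (in fact Polish). The second proposition shows that $\Gal$ is perfect, i.e.\ it has no isolated points under the forgetful functor $\mathbf{ProFin}\to\mathbf{Top}$. Finally, $\Gal$ is nonempty since it contains the identity, and in fact it is uncountable: any compact metrizable perfect nonempty space is uncountable, either by a direct Cantor-scheme argument or by the Baire category theorem.

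Second, I would apply the Brouwer characterization to the underlying topological space of $\Gal$, concluding directly that $\Gal\myeq\{0,1\}^{\mathbb{N}}$. As an alternative route, one may cite the classification of totally disconnected locally compact Polish groups recalled earlier in Section~\ref{S:3}: since $\Gal$ is compact and uncountable, it cannot fall under case (1) (countable discrete) or case (3) (non-compact of the form $\cC\times\mathbb{N}$), so it must be homeomorphic to $\cC$ by case (2). Either route yields the homeomorphism claimed.

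There is no real obstacle: the entire content of the statement has been pre-loaded into the earlier propositions, and the remaining step is the invocation of a well-known characterization theorem. The only mild subtlety is to emphasize that the homeomorphism is purely topological, forgetting the group structure entirely, since canonicity fails and no natural bijection with $\{0,1\}^{\mathbb{N}}$ is produced at this stage; the explicit encoding is deferred to the Cubic Matrioshka Algorithm in Section~\ref{S:CubicMa}.
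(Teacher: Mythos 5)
Your proof is correct and takes essentially the same route as the paper: the paper simply cites \cite[Thm.~2.6.5]{RZ}, which states that a non-discrete, metrizable profinite group is homeomorphic to the Cantor set, while you unpack this by explicitly invoking Brouwer's characterization theorem together with the topological properties established in the preceding propositions. The only cosmetic difference is that the paper delegates to a profinite-group-specific packaged statement whereas you apply the underlying topological theorem directly (and offer the t.d.l.c.\ classification of \cite{K} as an alternate route, which the paper itself uses in the parallel argument for $\GT$).
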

\begin{proof}
This is a direct application of \cite[Thm. 2.6.5]{RZ} which states that a non-discrete, metrizable profinite group is homeomorphic to the Cantor set (a compact, metrizable, perfect, totally disconnected space).
\end{proof}

\begin{rem}
A reminiscent of a fractal tree-like construction can be seen in the construction \cite{Ser2}.
\end{rem}

\end{document}